\tikzset{>={Latex[width=2.5mm,length=2.5mm]}}
\tikzstyle{block}=[draw opacity=0.7,line width=1.4cm]
\newcommand{\margin}[1]{\marginpar{\color{red}\tiny\ttfamily#1}}
\newcommand{\needATT}[1]{{\color{purple}#1}}
\newcommand{\real}{{\mathbb{R}}} 
\newcommand{\realnonnegative}{{\mathbb{R}}_{\ge 0}}
\newcommand{\vect}[1]{\boldsymbol{\mathbf{#1}}}
\newcommand{\SUM}[2]{\sum_{#1}^{#2}}
\newcommand{\boxend}{\hfill \ensuremath{\Box}}
\newcommand{\Lnorm}{\left\|} \newcommand{\Rnorm}{\right\|}
\newtheorem{thm}{Theorem}[section]
\newtheorem{prop}{Proposition}[section]
\newtheorem{rem}{Remark}[section]
\newtheorem{lem}{Lemma}[section]
\newenvironment{proof}[1][Proof]{\begin{trivlist}
\item[\hskip \labelsep {\bfseries #1}]}{\boxend\end{trivlist}}
\renewcommand*{\@opargbegintheorem}[3]{\trivlist
      \item[\hskip \labelsep{\bfseries #1\ #2}] \textbf{(#3)}\ \itshape}
\newcommand{\oprocendsymbol}{\hbox{$\bullet$}}
\newcommand{\oprocend}{\relax\ifmmode\else\unskip\hfill\fi\oprocendsymbol}
\title{\LARGE \bf Distributed Strategy Selection: A Submodular Set Function Maximization Approach} 
\author{%
  Navid Rezazadeh\\
  Mechanical and Aerospace Eng. Dept.\\
  University of California Irvine\\
  Irvine, CA \\
  \texttt{nrezazad@uci.edu}
  \And
  Solmaz S. Kia\\
  Mechanical and Aerospace Eng. Dept.\\
  University of California Irvine\\
  Irvine, CA \\
  \texttt{solmaz@uci.edu} \\
}
\begin{document}
\maketitle
\thispagestyle{plain}
\pagestyle{plain}

\begin{abstract}                     
Constrained submodular set function maximization problems often appear in multi-agent decision-making problems with a discrete feasible set. A prominent example is the problem of multi-agent mobile sensor placement over a discrete domain. Submodular set function optimization problems, however, are known to be NP-hard. This paper considers a class of submodular optimization problems that consist of maximization of a monotone and submodular set function subject to a uniform matroid constraint over a group of networked agents that communicate over a connected undirected graph. We work in the value oracle model where the only access of the agents to the utility function is through a black box that returns the utility function value. We propose a distributed suboptimal polynomial-time algorithm that enables each agent to obtain its respective strategy via local interactions with its neighboring agents. Our solution is a fully distributed gradient-based algorithm using the submodular set functions' multilinear extension followed by a distributed stochastic Pipage rounding procedure. This algorithm results in a strategy set that when the team utility function is evaluated at worst case, the utility function value is in $\frac{1}{c}(1-{\textup{e}}^{-c}-O(1/T))$ of the optimal solution with $c$ to be the curvature of the submodular function. An example demonstrates our results.
\end{abstract}

\section{Introduction}
This paper studies a multi-agent optimal planning in discrete space, which is often referred to as optimal \emph{strategy scheduling}. We consider a group of $\mathcal{A}\!=\!\{1,...,N\}$ agents with communication and computation capabilities, interacting over a connected undirected graph $\mathcal{G}(\mathcal{A},\mathcal{E})$. Each agent $i\in\mathcal{A}$ has a distinct discrete strategy set $\mathcal{P}_i$, known only to agent $i$, and wants to choose at most $\kappa_i\in\mathbb{Z}_{>0}$ strategies from $\mathcal{P}_i$ such that a monotone increasing and submodular utility function $f:2^\mathcal{P}\to\real_{\geq0}$, $\mathcal{P} \!=\!\bigcup\nolimits_{i \in \mathcal{A}}  \mathcal{P}_i$, evaluated at all the agents' strategy selection is maximized\footnote{We use standard notation, but for clarity we provide a brief description of the notation and the definitions in Section~\ref{sec::Notation}.}. 
In other words, the agents aim to solve in a distributed manner the discrete domain optimization~problem
\begin{subequations}\label{eq::mainProblem}
\begin{align}
    &\underset{\mathcal{R}\in \mathcal{I}}{\textup{max}}f(\mathcal{R}) \\
    \mathcal{I} = \big\{ \mathcal{R} \subset \mathcal{P}\,&\big|\,\, |\mathcal{R} \cap \mathcal{P}_i|\leq \kappa_i,~ \forall i\in\mathcal{A}\big\}.\label{eq::mainProblem_b}
\end{align}
\end{subequations}
 The agents' access to the utility function is through a black box that returns $f(\mathcal{R})$ for any given set $\mathcal{R}\in\mathcal{P}$ (value oracle model).
The constraint set~\eqref{eq::mainProblem_b} is a partition matroid, which restricts the number of strategy choices of each agent $i\in\mathcal{A}$ to $\kappa_i$. In a distributed solution, each agent $i\in\mathcal{A}$ should obtain its respective component $\mathcal{R}^\star_i\in\mathcal{P}_i$ of $\mathcal{R}^\star=\cup_{j=1}^N \mathcal{R}^\star_j$, the optimal solution of~\eqref{eq::mainProblem}, by interacting only with the agents that are in its communication range. Optimization problem~\eqref{eq::mainProblem} is known to be NP hard~\cite{GLN-LAW-MLF:78}. Our goal is to design a polynomial-time distributed solution for~\eqref{eq::mainProblem} with formal guarantees on the optimality bound. Our problem is motivated by a wide range of multi-agent exploration, and sensor placement problems for monitoring and coverage, which can be formulated in the form of submodular maximization problem~\eqref{eq::mainProblem}~\cite{NR-SSK:21,AH-MG-HV-UT:20,VT-AJ-GJP:16,XD-MG-RP-FB:20,SW-CGC:20,SA-EF-LSL:14}. An example application scenario is shown in Fig.~\ref{fig:city_partition}.

When the utility function of~\eqref{eq::mainProblem} is monotone increasing and submodular set function, the \emph{sequential greedy algorithm}~\cite{MLF-GLN-LAW:78} delivers a polynomial-time suboptimal solution with guaranteed optimality bound.
The sequential greedy algorithm is a central solution. Several attempts have been undertaken to adapt the sequential greedy algorithm for large-scale submodular maximization problems by reducing the size of the problem through approximations~\cite{KW-RI-JB:14} or using several processing units to achieve a faster sequential greedy algorithm, however, with some sacrifices on the optimality bound~\cite{BM-AK-RS-AK:13,BM-MZ-AK:16,RK-BM-SV-AV:15,PSR-OS-MF:20}. Decentralized implementation of the sequential greedy algorithm through sequential message passing or via sequential message sharing through a cloud is also discussed in~\cite{NR-SSK:21}.

The optimality gap achieved by sequential greedy algorithm for problem~\eqref{eq::mainProblem} is $\frac{1}{1+c}\,f(\mathcal{R}^\star)$~\cite{MC-GC:84}, where $c$ is the \emph{curvature} of the utility function. More recently, another suboptimal solution for problem~\eqref{eq::mainProblem} with an  improved optimality gap 
is proposed in the literature using the \emph{multilinear} continuous relaxation of a submodular set function~\cite{JV:08,AAB-BM-JB-AK:17,AM-HH-AK:20,OS-MF:20}. 
The relaxation transforms the discrete problem~\eqref{eq::mainProblem} into a continuous optimization problem with linear constraints. Then, a continuous gradient-based optimization algorithm is used to solve the continuous optimization problem. A suboptimal solution for~\eqref{eq::mainProblem} with the improved  optimality gap of $\frac{1}{c}(1-{\textup{e}}^{-c})\, f(\mathcal{R}^\star)$ then is obtained by properly \emph{rounding} the continuous-domain solution~\cite{JV:10}. This approach however requires a central authority to solve the problem. In sensor placement problems, when the agents are self-organizing autonomous mobile agents with communication and computation capabilities, it is desired to solve the optimization problem~\ref{eq::mainProblem} in a distributed way, without involving a central authority. For the special case of zero curvature, and when each agent has only one choice to make, i.e.,  $\kappa_i = 1$ for all $i\in\mathcal{A}$, \cite{AR-AA-BS-JGP-HH:19} has proposed an average consensus-based distributed algorithm to solve the multi-linear extension relaxation of~\ref{eq::mainProblem} over connected graphs. The solution of~\cite{AR-AA-BS-JGP-HH:19} requires exact knowledge of the multi-linear extension function. However, the computational complexity of constructing the exact value of the multi-linear extension of a submodular function and its derivatives is exponential in the size of the strategy set. The result in~\cite{AR-AA-BS-JGP-HH:19} also requires a centralized rounding scheme.

The multi-linear extension function of a submodular function, as we review in Section~\ref{sec::prelim_submod}, is equivalently the expected value of the submodular function evaluated at random sets obtained by picking strategies from the strategy set independently with a probability. This stochastic interpretation has allowed approximating the multi-linear extension function and its derivatives emphatically with a reasonable computational cost via sampling from the strategy set. This approach has been used~\cite{JV:10} in its continuous domain solution of submodular optimization problems. Of course, as expected, this approach comes with a penalty on the optimality gap that is inversely proportional to the number of the samples. 

In this paper, motivated by the improved optimality gap of the multilinear continuous relaxation-based algorithms, we develop a distributed implementation of the algorithm of~\cite{JV:10} over a connected undirected graph to obtain a suboptimal solution for~\eqref{eq::mainProblem}. We propose a gradient-based algorithm, which uses a maximum consensus message passing scheme over the communication graph. We complete our solution by designing a fully distributed rounding procedure to compute the final suboptimal strategy of each agent. In this algorithm, to manage the computational cost of constructing the multilinear extension of the utility function, we use a sampling-based evaluation of the multilinear extension. Through rigorous analysis, we show that our proposed distributed algorithm in finite time $T$ achieves, with a known probability, a $\frac{1}{c}(1-{\textup{e}}^{-c})-O(1/T)$ optimality bound, where $1/T$ is the step size of the algorithm and the frequency at which agents communicate over the network. A numerical example demonstrates our results.

\section{Preliminaries}\label{sec::Notation}
This section introduces our notation and relevant definitions from graph theory and submodular set functions\footnote{Since we use standard notation, the reader familiar with the subject can continue to Section~\ref{sec::Main} and come back to this section if the need arises.}. 
\subsection{Notion}
We denote the vectors with bold small font.  The $p^{\textup{th}}$ element of vector $\vect{x}$ is denoted by $[\vect{x}]_p$. We denote the inner product of two vectors $\vect{x}$ and $\vect{y}$ with appropriate dimensions by $\vect{x}.\vect{y}$. We use $\vect{1}$ as a vector of ones, whose dimension is understood from the context. We denote sets with the capital curly font. 
Given a ground set $\mathcal{P}=\{1,\cdots,n\}$, we define the membership probability vector $\vect{x} \in[0,1]^n$ to obtain  $\mathcal{R}_{\vect{x}}\subset\mathcal{P}$ as a random set where $p \in\mathcal{P}$ is in  $\mathcal{R}_{\vect{x}}$ with the probability $[\vect{x}]_p$. For $\mathcal{R} \subset \mathcal{P}$, $\vect{1}_{\mathcal{R}} \in \{0,1\}^{n}$ is the vector whose $p^{\textup{th}}$ element is $1$ if $p \in \mathcal{R}$ and $0$ otherwise; we call $\vect{1}_{\mathcal{R}}$ the membership indicator vector of set $\mathcal{R}$. Given a finite countable set $\mathcal{R}\subset\real$ and integer $\kappa$, $1\leq \kappa<|\mathcal{R}|$, $\max(\mathcal{R},\kappa)$ returns the $\kappa$ largest elements of $\mathcal{R}$. $|x|$ is the absolute value of  $x \in \real$. By overloading the notation, we also use $|\mathcal{R}|$ as the cordiality of set $\mathcal{R}$. For a set function $f:2^\mathcal{P}\to \real$, we define $\Delta_f(p|\mathcal{R}) = f(\mathcal{R} \cup \{p\})-f(\mathcal{R}),\,\, \mathcal{R} \subset \mathcal{P}$. A set function is \emph{normalized} if $f(\emptyset)=0$. 
Given a set $\mathcal{F}\subset \mathcal{X}\times \real$ and an element $(p,\alpha)\in \mathcal{X}\times \real$ we define the addition operator $\oplus$ as $\mathcal{F}{\oplus}\{(p,\alpha)\}=\{(u,\gamma)\in \mathcal{X}\times \real\,|\,(u,\gamma) \in \mathcal{F}, u\not = p \} \cup \{(u,\gamma+\alpha)\in \mathcal{X}\times \real\,|\, (u,\gamma) \in \mathcal{F}, u = p \} \cup \{(p,\alpha)\in \mathcal{X}\times \real\,|\,(p,\gamma) \not \in \mathcal{F}, \gamma \in \mathbb{R} \}$. Given a collection of sets $\mathcal{F}_i\in \mathcal{X}\times \real$, $i\in\mathcal{A}$, we define the max-operation over these collection as $\underset{i\in \mathcal{A}}{\textup{MAX}} \,\,\mathcal{F}_i= \{(u,\gamma)\in \mathcal{X}\times \real| (u,\gamma) \in \bar{\mathcal{F}} \text{~s.t.~} \gamma =  \underset{(u,\alpha) \in \bar{\mathcal{F}}}{\textup{max}} \alpha\}$, where $\bar{\mathcal{F}}=\bigcup_{i\in \mathcal{A}} \mathcal{F}_i.$

We denote a graph by $\mathcal{G}(\mathcal{A},\mathcal{E})$ where $\mathcal{A}$ is the node set and $\mathcal{E} \subset \mathcal{A} \times \mathcal{A}$ is the edge set. 
$\mathcal{G}$ is undirected if and only is $(i,j) \in \mathcal{E}$ means that agents $i$ and $j$ can exchange information. An undirected graph is connected if there is a path from each node to every other node in the graph. We denote the set of the neighboring nodes of node $i$ by $\mathcal{N}_i=\{j\in\mathcal{A}|(i,j) \in \mathcal{E} \}$. We also use $d(\mathcal{G})$ to show the diameter of the graph.

\subsection{Submodular Functions}\label{sec::prelim_submod}
A set function $f:2^{\mathcal{P}} \to \realnonnegative$ is monotone increasing if $f(\mathcal{P}_1)\leq f(\mathcal{P}_2)$, and submodular if 
for any $p \in \mathcal{P} \setminus \mathcal{P}_2$, 
\begin{align}\label{eq::submodularity}
    \Delta_f(p|\mathcal{P}_1)\geq \Delta_f(p|\mathcal{P}_2),
\end{align}
hold for any sets $\mathcal{P}_1 \subset \mathcal{P}_2 \subset \mathcal{P}$. The total curvature of a submodular set function $f:2^{\mathcal{P}} \to \realnonnegative$, which shows the worst-case increase in the value of the function when member $p$ is added, is defined as 
\begin{align}\label{eq::totalCurvature}
    c = 1 - \underset{\mathcal{S},p \not \in \mathcal{S}}{\textup{min}}\frac{\Delta_f(p|{\mathcal{S}})}{\Delta_f(p|\emptyset)}
\end{align}
Note that $c\in[0,1]$; the curvature of $c=0$ means that the function is modular, i.e.,  $f(\{p_1,p_2\})= f(\{p_1\})+f(\{p_2\})$, $p_1,p_2\in\mathcal{P}$, while $c=1$ means that there is at least a member that adds no value to function $f$ in a special circumstance. Curvature $c$ represents a measure of the diminishing return of a set function. Whenever the total curvature is not known, it is rational to assume the worst case scenario and set $c=1$. 

In the rest of this paper, without loss of generality, we assume that the ground set $\mathcal{P}$ is equal to $\{1,\cdots,n\}$. 
For a submodular function $f:2^{\mathcal{P}} \to \realnonnegative$, its \emph{multilinear extension}  $F:[0,1]^{n} \to \realnonnegative$ in the continuous space is 
\begin{align}\label{eq::F_determin}
    F(\vect{x}) = \!\!\sum_{\mathcal{R} \subset \mathcal{P}}{}\! f(\mathcal{R}) \prod_{p \in \mathcal{R}}^{}\! [\vect{x}]_p \prod_{p \not\in \mathcal{R}}^{}\!(1-[\vect{x}]_p),~~ \vect{x} \in [0,1]^{n}.
\end{align}
$F$ in~\eqref{eq::F_determin} is indeed equivalent to 
\begin{align}\label{eq::F_stoc}
   F(\vect{x})=\mathbb{E}[f(\mathcal{R}_{\vect{x}})],  
\end{align} where $\mathcal{R}_{\vect{x}}\subset \mathcal{P}$ is a set where each element $p\in\mathcal{R}_{\vect{x}}$ appears independently with the probabilities $[\vect{x}]_p$. Then,  taking the derivatives of $F(\vect{x})$ yields 
\begin{align}\label{eq::firstDer}
\frac{\partial F}{\partial [\vect{x}]_p} (\vect{x})= \mathbb{E}[f(\mathcal{R}_{\vect{x}} \cup \{p\})-f(\mathcal{R}_{\vect{x}} \setminus \{p\})],
\end{align}
and
\begin{align}\label{eq::secondDer}
& \frac{\partial^2 F}{\partial [\vect{x}]_p \partial [\vect{x}]_q}(\vect{x}) = \mathbb{E}[f(\mathcal{R}_{\vect{x}} \cup \{p,q\})-f(\mathcal{R}_{\vect{x}} \cup \{q\} \setminus \{p\})\nonumber\\
&\quad\quad \quad \quad-f(\mathcal{R}_{\vect{x}} \cup \{p\} \setminus \{q\})+f(\mathcal{R}_{\vect{x}} \setminus \{p,q\})].
\end{align}

To construct $F(\vect{x})$ and its derivatives we should evaluate  $f(\mathcal{R})$ for all $\mathcal{R} \in 2^ \mathcal{P}$. Therefore, when the size of $\mathcal{P}$ grows, constructing $F(\vect{x})$ and its derivatives computationally become intractable. The stochastic interpretation~\eqref{eq::F_stoc} of the multilinear extension and its derivatives offer a mechanism to estimate them with a reasonable computational cost via sampling.  Chernoff-Hoeffding's inequality can be used to quantify the quality of these estimates given the number of samples.
\begin{thm}[Chernoff-Hoeffding's inequality~\cite{WH:94}]\label{thm::sampling}
Consider a set of $K$ independent random variables $X_1,...,X_K$ where $a<X_i<b$. Let $S_K=\SUM{i=1}{K} X_i$, then
\begin{align*}
    \mathbb{P}[|S_K - \mathbb{E}[S_K]|>\delta] < 2 \, \textup{e}^{-\frac{2\delta ^2}{K(b-a)^2}},
\end{align*}
for any $\delta\in\real_{\geq 0}$.
\end{thm}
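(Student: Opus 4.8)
The plan is to establish the one-sided tail bound $\mathbb{P}[S_K - \mathbb{E}[S_K] > \delta] \leq e^{-2\delta^2/(K(b-a)^2)}$ and then recover the two-sided statement by symmetry together with a union bound. To control the upper tail, I would use the Chernoff exponential-moment technique: for any $s>0$, the event $\{S_K - \mathbb{E}[S_K] > \delta\}$ is identical to $\{e^{s(S_K - \mathbb{E}[S_K])} > e^{s\delta}\}$, so Markov's inequality applied to the nonnegative random variable $e^{s(S_K - \mathbb{E}[S_K])}$ gives
\begin{align*}
\mathbb{P}[S_K - \mathbb{E}[S_K] > \delta] \leq e^{-s\delta}\, \mathbb{E}\big[e^{s(S_K - \mathbb{E}[S_K])}\big].
\end{align*}
Writing $Y_i = X_i - \mathbb{E}[X_i]$, independence of the $X_i$ factorizes the moment generating function as $\mathbb{E}[e^{s\sum_i Y_i}] = \prod_{i=1}^{K} \mathbb{E}[e^{s Y_i}]$.

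The key per-factor estimate is a bound on each $\mathbb{E}[e^{sY_i}]$. Each $Y_i$ is zero-mean and supported in an interval of length $b-a$, since $a - \mathbb{E}[X_i] \leq Y_i \leq b - \mathbb{E}[X_i]$. I would invoke Hoeffding's lemma, which asserts that for any zero-mean random variable $Y$ taking values in $[\alpha,\beta]$ one has $\mathbb{E}[e^{sY}] \leq e^{s^2(\beta-\alpha)^2/8}$. Applying this with $\beta-\alpha = b-a$ to every factor yields
\begin{align*}
\mathbb{P}[S_K - \mathbb{E}[S_K] > \delta] \leq e^{-s\delta}\, e^{K s^2 (b-a)^2/8}.
\end{align*}

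It then remains to optimize the free parameter $s$. Minimizing the exponent $-s\delta + K s^2(b-a)^2/8$ over $s>0$ gives the minimizer $s^\star = 4\delta/(K(b-a)^2)$, and substituting this value collapses the exponent to $-2\delta^2/(K(b-a)^2)$, which establishes the upper-tail bound. Repeating the identical argument for the variables $-X_i$ controls the lower tail $\mathbb{P}[S_K - \mathbb{E}[S_K] < -\delta]$ by the same quantity, and adding the two tail probabilities produces the factor of $2$ in the claimed inequality.

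The main obstacle is Hoeffding's lemma itself, which is the only nonelementary ingredient; everything else is Markov's inequality, independence, and a one-variable optimization. Its proof rests on convexity of $y \mapsto e^{sy}$: on $[\alpha,\beta]$ the exponential lies below the chord through its endpoint values, which bounds $\mathbb{E}[e^{sY}]$ by an expression of the form $e^{\phi(s)}$; a second-order Taylor estimate with $\phi(0)=\phi'(0)=0$ and $\phi''(s)\leq (\beta-\alpha)^2/4$ then delivers the stated $e^{s^2(\beta-\alpha)^2/8}$ bound. Since this result is classical and available from the cited reference, I would state it as a lemma and concentrate the exposition on the Chernoff bounding and the parameter optimization above.
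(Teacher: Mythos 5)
Your proof is correct: the Chernoff exponential-moment bound, the factorization by independence, Hoeffding's lemma with interval length $b-a$, the optimization $s^\star = 4\delta/(K(b-a)^2)$ yielding the exponent $-2\delta^2/(K(b-a)^2)$, and the union bound for the factor of $2$ are all exactly the standard argument. The paper itself offers no proof of this statement --- it is quoted verbatim from the cited reference --- so there is nothing to compare against; your write-up is the canonical derivation of the cited result. The only cosmetic mismatch is that your argument delivers the bound with $\leq$ rather than the strict $<$ written in the theorem statement, which is immaterial for every use the paper makes of it.
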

Given a ground set $\mathcal{P}$,
Partition matroid $\mathcal{M}$ is defined as $\mathcal{M} = \{ \mathcal{R} \subset \mathcal{P} \big| \,\, |\mathcal{R} \cap \mathcal{P}_i|\leq k_i, \forall i\in\mathcal{A}\}$, $1\leq \kappa_i\leq |\mathcal{P}_i|$ and $\mathcal{A}=\{1,\dots,N\}$, with  $\bigcup\nolimits_{i \in \mathcal{A}}  \mathcal{P}_i =  \mathcal{P}$ and $ \mathcal{P}_i \cap  \mathcal{P}_j = \emptyset,\,\, i\neq j$. 
The \emph{matroid polytop} for partition matroid is a convex hull defined as (with abuse of notation)
\begin{align}\label{eq::convexhull}\mathcal{M}=\{ \vect{x} \in [0,1]^{n}\,\big|\,\sum\nolimits_{p \in \mathcal{P}_i}{}[\vect{x}_i]_{p}\leq \kappa_i, \forall i\in\mathcal{A}\}.
\end{align}
where $[\vect{x}]_p, \,\, p \in \mathcal{P}_i$ associated with membership probability of strategies in $\mathcal{P}_i$, $i\in\mathcal{A}=\{1,\cdots,N\}$, we have $\SUM{p \in \mathcal{P}_i}{} [\vect{x}]_p \leq \kappa_i$.

The following result is established for the optimal solution of the optimization problem~\eqref{eq::mainProblem}
\begin{lem}[\cite{JV:10}]\label{lm::optimalInEq}
{\it
Consider the set value optimization problem~\eqref{eq::mainProblem}. Suppose $f:\mathcal{P} \to \realnonnegative$ is an increasing and submodular set function with curvature $c$ whose multilinear extension is  $F:[0,1]^{n} \to \realnonnegative$. Let $\mathcal{R}^\star$ be the optimizer of~\eqref{eq::mainProblem}. Then, 
\begin{align*}
    \vect{1}_{\mathcal{R}^\star}.\nabla F(\vect{x}) \geq f(\mathcal{R}^\star) - c\,F(\vect{x}),\quad \forall \vect{x} \in \mathcal{M}.
\end{align*}
}\end{lem}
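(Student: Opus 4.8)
The plan is to prove the inequality pointwise over the random-set realizations underlying the multilinear extension and then average. Using the stochastic form of the first derivative~\eqref{eq::firstDer}, I would first write $\vect{1}_{\mathcal{R}^\star}.\nabla F(\vect{x}) = \sum_{p\in\mathcal{R}^\star}\mathbb{E}[f(\mathcal{R}_{\vect{x}}\cup\{p\}) - f(\mathcal{R}_{\vect{x}}\setminus\{p\})]$. Since expectation is linear and monotone, it suffices to establish, for every fixed realization $R$ of the random set $\mathcal{R}_{\vect{x}}$, the deterministic bound $\sum_{p\in\mathcal{R}^\star}\big(f(R\cup\{p\}) - f(R\setminus\{p\})\big) \geq f(\mathcal{R}^\star) - c\,f(R)$; taking $\mathbb{E}[\cdot]$ and invoking $F(\vect{x})=\mathbb{E}[f(\mathcal{R}_{\vect{x}})]$ from~\eqref{eq::F_stoc} then yields the claim.

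Next I would split the index set $\mathcal{R}^\star$ according to membership in $R$. For $p\in\mathcal{R}^\star\setminus R$ the summand equals $\Delta_f(p|R)$, while for $p\in\mathcal{R}^\star\cap R$ it equals $\Delta_f(p|R\setminus\{p\})$. A telescoping expansion over $\mathcal{R}^\star\setminus R$ together with submodularity~\eqref{eq::submodularity} gives $\sum_{p\in\mathcal{R}^\star\setminus R}\Delta_f(p|R)\geq f(R\cup\mathcal{R}^\star)-f(R)$. Hence the per-realization left-hand side is at least $f(R\cup\mathcal{R}^\star) - f(R) + \sum_{p\in\mathcal{R}^\star\cap R}\Delta_f(p|R\setminus\{p\})$, and the target reduces to showing $\big(f(R\cup\mathcal{R}^\star)-f(\mathcal{R}^\star)\big) + \sum_{p\in\mathcal{R}^\star\cap R}\Delta_f(p|R\setminus\{p\}) \geq (1-c)\,f(R)$.

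The curvature enters here, and this is the step I expect to be the crux: the naive bound $f(R\cup\mathcal{R}^\star)\geq f(\mathcal{R}^\star)$ discards the $\mathcal{R}^\star\cap R$ contribution and only yields the weaker factor $1$ in place of $c$, so the two partial sums over $R$ must be retained and combined rather than dropped. I would expand $f(R\cup\mathcal{R}^\star)-f(\mathcal{R}^\star)$ as a telescoping sum of marginals $\Delta_f(p|\cdot)$ over $p\in R\setminus\mathcal{R}^\star$ and apply the curvature inequality $\Delta_f(p|\mathcal{S})\geq(1-c)\,\Delta_f(p|\emptyset)$ (which is~\eqref{eq::totalCurvature} rearranged, since the displayed minimum equals $1-c$) to every marginal in that sum and in $\sum_{p\in\mathcal{R}^\star\cap R}\Delta_f(p|R\setminus\{p\})$; as $R\setminus\mathcal{R}^\star$ and $\mathcal{R}^\star\cap R$ partition $R$, this produces the lower bound $(1-c)\sum_{p\in R}\Delta_f(p|\emptyset)$. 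Finally, subadditivity of a normalized monotone submodular function, $f(R)\leq\sum_{p\in R}\Delta_f(p|\emptyset)$ (itself a one-line telescoping-plus-submodularity argument), closes the gap to $(1-c)f(R)$, which establishes the per-realization inequality and, after taking expectations, the lemma.
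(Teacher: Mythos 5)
Your proposal is correct and follows essentially the same route as the paper's own argument: decompose $\vect{1}_{\mathcal{R}^\star}.\nabla F(\vect{x})$ per realization into $\sum_{p\in\mathcal{R}^\star}\Delta_f(p|\mathcal{R}_{\vect{x}})+\sum_{p\in\mathcal{R}^\star\cap\mathcal{R}_{\vect{x}}}\Delta_f(p|\mathcal{R}_{\vect{x}}\setminus\{p\})$, telescope with submodularity to get $f(\mathcal{R}_{\vect{x}}\cup\mathcal{R}^\star)-f(\mathcal{R}_{\vect{x}})$, and use the curvature bound $\Delta_f(p|\mathcal{S})\geq(1-c)\Delta_f(p|\emptyset)$ together with subadditivity to recover $f(\mathcal{R}^\star)-c\,f(\mathcal{R}_{\vect{x}})$ before taking expectations. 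Your identification of the crux — that the $\mathcal{R}^\star\cap\mathcal{R}_{\vect{x}}$ terms must be retained and combined with the excess $f(\mathcal{R}_{\vect{x}}\cup\mathcal{R}^\star)-f(\mathcal{R}^\star)$ over the partition of $\mathcal{R}_{\vect{x}}$ — is exactly where the paper's proof also does its work.
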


\section{Decentralized continuous-domain strategy selection}\label{sec::Main}
The utility function $f$  assigns values to all the subsets of $\mathcal{P} \!=\!\bigcup\nolimits_{i \in \mathcal{A}}  \mathcal{P}_i=\{1,\cdots,n\}$. Thus, equivalently, we can regard the set value utility function as a function on the Boolean hypercube $ \{0,1\}^n$, i.e., $f : \{0,1\}^n \to\real$. Multilinear-extension function $F(\vect{x})$, given in~\eqref{eq::F_determin}, expands the function evaluation of the utility function to over the space between the vertices of this Boolean hypercube. On the other hand, the matroid polytope $\mathcal{M}$ is the convex haul of the vertices of the hypercube $\{0,1\}^n$ that satisfy the partition matroid constraint~\eqref{eq::mainProblem_b}.  
Additionally, note that according to~\eqref{eq::F_determin}, $F(\vect{x})$ for any $\vect{x} \in \mathcal{M}$ is a weighted average of values of $F$ at the vertices of the matriod polytope $\mathcal{M}$. Then, equivalently, 
$F(\vect{x})$ at any $\vect{x} \in \mathcal{M}$ is a  normalized-weighted average of $f$ on the strategies satisfying constraint~\eqref{eq::mainProblem_b}. 
As such, 
\begin{align*}
    f(\mathcal{R}^\star) \geq F(\vect{x}), \,\,\, \vect{x} \in \mathcal{M},
\end{align*}
which is equivalent to $f(\mathcal{R}^\star) =\underset{\vect{x} \in \mathcal{M}}{\textup{max}} \,\,F(\vect{x})$, where $\mathcal{R}^\star$ is the optimizer of problem~\eqref{eq::mainProblem}~\cite{JV:08}. Therefore, solving the continuous domain optimization problem \begin{align}\label{eq::continu_opt}\underset{\vect{x} \in \mathcal{M}}{\textup{max}} \,\,F(\vect{x}),
\end{align}
by using a gradient-based method combined with a proper \emph{rounding} procedure presents itself as a gateway to solving the optimization problem~\eqref{eq::mainProblem}. For instance, the solution proposed in~\cite{JV:10} to solve~\eqref{eq::continu_opt} is the constrained gradient ascent solver
\begin{align}\label{eq::VNabF}
\frac{\text{d}\vect{x}}{\text{d}t}=\vect{v}(\vect{x})~~\text{where}~~ \vect{v}(\vect{x})=\underset{\vect{w}\in \mathcal{M}}{\arg\max}(\vect{w}.\nabla F(\vect{x})),\end{align}
initialized at $\vect{x}(0)\!=\!\vect{0}$. Since $\mathcal{M}$ is convex and $\vect{x}(0)\!=\!\vect{0}\!\in\!\mathcal{M}$, the trajectory $t\!\mapsto\!\vect{x}$ of~\eqref{eq::VNabF} belongs to  $\mathcal{M}$ for $t\in[0,1]$. Due to Lemma~\ref{lm::optimalInEq}, ascent direction  in~\eqref{eq::VNabF} satisfies
\begin{align*}
    \frac{\text{d}F}{\text{d}t}=\vect{v}(\vect{x}).\nabla F(\vect{x})\geq f(\mathcal{R}^\star)-c\,F(\vect{x}).
\end{align*}
Using the Comparison Lemma~\cite{HKK-JWG:02} then~\eqref{eq::VNabF} concludes that $F(\vect{x}(1))\geq \frac{1}{c}(1-\text{e}^{-c})\,f(\mathcal{R}^\star)$. 
To complete its suboptimal solution,~\cite{JV:10} uses \emph{Pipage} rounding procedure~\cite{AAA-MIS:04} on $\vect{x}(1)$ to obtain the suboptimal solution of~\eqref{eq::mainProblem} with optimality gap of $\frac{1}{c}(1-\text{e}^{-c})\,f(\mathcal{R}^\star)$. 

Inspired by the central solver~\eqref{eq::VNabF}, in what follows, we present a practical finite time distributed solution for problem~\eqref{eq::mainProblem}. Our solution includes a proper distributed sampling method to obtain a polynomial-time numerical approximation of $\nabla\vect{F}(\vect{x})$ and also a distributed rounding~procedure.

\subsection{Distributed Discrete Gradient Ascent Solution}

To design a distributed iterative solution for~\eqref{eq::mainProblem}, let every agent $i\in\mathcal{A}$ maintain and evolve a local copy of the membership probability vector as $\vect{x}_i(t) \in\real^n$. Since $\mathcal{P}=\{1, \cdots, n\}$ is sorted agent-wise, we denote $\vect{x}_i(t)=[\hat{\vect{x}}_{i1}^\top(t),\cdots,\vect{x}_{ii}^\top(t),\cdots,\hat{\vect{x}}_{iN}^\top(t)]^\top\in\real^n$ where $\vect{x}_{ii}(t)\in \realnonnegative^{|\mathcal{P}_i|}$ is the membership probability vector of agent $i$'s own strategy with entries of $[\vect{x}_i(t)]_p, \,\, p \in \mathcal{P}_i$ at iteration $t\in\{0,1,\cdots,T\}$, $T\in\mathbb{Z}_{>0}$, while $\hat{\vect{x}}_{ij}(t) \in \realnonnegative^{|\mathcal{P}_j|}$ is the local  estimate of the membership probability vector of agent $j$ by agent $i$ with entries of $[\vect{x}_i(t)]_p \,\, p \in \mathcal{P}_j, \,\, j \in \mathcal{A} \setminus i$. Every agent $i\in\mathcal{A}$ initializes at $\vect{x}_i(0)=\vect{0}$.

At time step $t$, each agent $i \in \mathcal{A}$ generates $K_i$ independent samples $\{\mathcal{R}^k_{i}(t)\}_{k=1}^{K_i}$ of random set $\mathcal{R}_{\vect{x}_i(t)}$.

drawn according to membership probability vector $\vect{x}_i(t)$ from $\mathcal{P}$ and empirically computes gradient vector $\nabla F(\vect{x}_i(t)) \in \realnonnegative^{n}$, which according to~\eqref{eq::firstDer} is defined element-wise as 
\begin{align}\label{eq::emperic_nabla_F}
\left[\widetilde{\nabla F}(\vect{x}_i(t))\right]_p \!\!=\!\!\left(\!\sum_{\,k=1}^{K_i}\! f(\mathcal{R}^k_{i}(t) \!\cup \!\{p\})\!-\!f(\mathcal{R}^k_{i}(t)\!\setminus\! \{p\})\!\!\right)\!\!\Big/\!K_i,
\end{align}
$p\in\mathcal{P}=\{1,\cdots,n\}$. Hereafter, $\widetilde{\nabla F}(\vect{x}_i(t))$ is the empirical estimate of $\nabla F(\vect{x}_i(t))$. The following lemma, whose proof is given in the Appendix B and relies on the Chernoff-Hoeffding's inequality, can quantify the quality of  this estimate. 
\begin{lem}\label{lem::gradient_estimate}{\it
Consider the set value optimization problem~\eqref{eq::mainProblem}. Suppose $f:\mathcal{P} \to \realnonnegative$ is an increasing and submodular set function and consider its multilinear extension $F:[0,1]^n \to \realnonnegative$. Let $\widetilde{\nabla F}(\vect{x})$ be the  estimate of $\nabla F(\vect{x})$ that is calculated by taking $K\in\mathbb{Z}_{>0}$ samples of set $\mathcal{R}_{\vect{x}}$ according to membership probability vector $\vect{x}$.  Then,  
\begin{align}\label{eq::estimationACC}
  \!\!  \left|\left[\widetilde{\nabla F}(\vect{x}(t))\right]_p - \frac{{\partial F}}{\partial [\vect{x}]_p}(\vect{x})\right|\! \geq \frac{1}{2T}f(\mathcal{R}^\star), ~~ p\in \{1,\cdots,n\},
\end{align}
with the probability of $2\textup{e}^{-\frac{1}{8T^2}K}$, for any $T\in\mathbb{Z}_{>0}$.
}
\end{lem}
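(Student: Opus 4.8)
The plan is to read the empirical gradient component in~\eqref{eq::emperic_nabla_F} as a sample mean of independent, bounded random variables and then apply the Chernoff--Hoeffding inequality of Theorem~\ref{thm::sampling}. Fix a coordinate $p\in\{1,\dots,n\}$ and let $\mathcal{R}^1,\dots,\mathcal{R}^K$ denote the $K$ independent samples of $\mathcal{R}_{\vect{x}}$. Define $X_k = f(\mathcal{R}^k \cup \{p\}) - f(\mathcal{R}^k \setminus \{p\})$. By construction the $X_k$ are i.i.d.; by~\eqref{eq::firstDer} their common expectation is exactly $\frac{\partial F}{\partial [\vect{x}]_p}(\vect{x})$; and the estimate satisfies $[\widetilde{\nabla F}(\vect{x})]_p=\frac{1}{K}\sum_{k=1}^K X_k$. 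So the left-hand side of~\eqref{eq::estimationACC} is precisely the deviation of a sample mean from its expectation, which is the exact shape Theorem~\ref{thm::sampling} controls.

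The key preparatory step, and the only one using the structural hypotheses on $f$, is to bound the range of $X_k$. Whether or not $p\in\mathcal{R}^k$, the difference $X_k$ equals a marginal gain $\Delta_f(p\,|\,\mathcal{S})$ for the set $\mathcal{S}=\mathcal{R}^k\setminus\{p\}$, which does not contain $p$. Monotonicity gives $\Delta_f(p\,|\,\mathcal{S})\ge 0$, while submodularity~\eqref{eq::submodularity} gives $\Delta_f(p\,|\,\mathcal{S})\le \Delta_f(p\,|\,\emptyset)=f(\{p\})-f(\emptyset)\le f(\{p\})$, the last step using $f(\emptyset)\ge 0$ since $f$ maps into $\realnonnegative$. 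Because each $\kappa_i\ge 1$, every singleton $\{p\}$ is feasible, so optimality of $\mathcal{R}^\star$ together with monotonicity yields $f(\{p\})\le f(\mathcal{R}^\star)$. Hence $|X_k|\le f(\mathcal{R}^\star)$, so each $X_k$ lies in $[-f(\mathcal{R}^\star),\,f(\mathcal{R}^\star)]$ with width $b-a=2f(\mathcal{R}^\star)$.

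With the range fixed, the conclusion is a direct substitution. Writing $S_K=\sum_{k=1}^K X_k$ so that $\mathbb{E}[S_K]=K\frac{\partial F}{\partial [\vect{x}]_p}(\vect{x})$, the event in~\eqref{eq::estimationACC} is equivalent to $|S_K-\mathbb{E}[S_K]|\ge \frac{K}{2T}f(\mathcal{R}^\star)$. Applying Theorem~\ref{thm::sampling} with $\delta=\frac{K}{2T}f(\mathcal{R}^\star)$ and $(b-a)^2=4f(\mathcal{R}^\star)^2$ produces the exponent $\frac{2\delta^2}{K(b-a)^2}=\frac{2(K/2T)^2 f(\mathcal{R}^\star)^2}{4K\,f(\mathcal{R}^\star)^2}=\frac{K}{8T^2}$, giving the stated bound $2\,\textup{e}^{-K/(8T^2)}$. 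The cancellation of the $f(\mathcal{R}^\star)^2$ factors is exactly why the threshold was taken proportional to $f(\mathcal{R}^\star)$.

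I expect the range bound, not the concentration step, to be the only place demanding care. The symmetric choice $[-f(\mathcal{R}^\star),f(\mathcal{R}^\star)]$ is what yields the factor $8T^2$; the tighter one-sided bound $X_k\in[0,f(\mathcal{R}^\star)]$, which monotonicity alone justifies, would halve $b-a$ and sharpen the exponent to $K/(2T^2)$, so the stated estimate is on the conservative side. In either case the failure probability decays exponentially in $K$, and since the bound is uniform in $p$ and independent of $\vect{x}$, it applies coordinatewise and at every iterate $\vect{x}(t)$, as the statement requires.
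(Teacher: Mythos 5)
Your proof is correct and follows essentially the same route as the paper's: identify each gradient component as a sample mean of i.i.d.\ marginal gains bounded in magnitude by $f(\mathcal{R}^\star)$, then apply Chernoff--Hoeffding with $\delta=\tfrac{K}{2T}f(\mathcal{R}^\star)$. Your bookkeeping of the range is in fact more careful than the paper's, which asserts the one-sided normalized bound $0\le X\le 1$ yet quotes the exponent $K/(8T^2)$ that only a width-two range justifies --- precisely the discrepancy you flag in your closing paragraph.
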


Let each agent $i \in \mathcal{A} $ \emph{propagate} its local probability membership function with step size $\frac{1}{T}$ according to 
\begin{align}\label{eq::updateRule1}
    \vect{x}_i^-(t+1) = \vect{x}_i(t)+\frac{1}{T}\widetilde{\vect{v}}_i(t),
\end{align}
where 
\begin{align}\label{eq::updateVect}
\widetilde{\vect{v}}_i(t) = \underset{\vect{w} \in \mathcal{M}_i}{\textup{argmax}}\, \vect{w}.\widetilde{\nabla F}(\vect{x}_i(t)) 
\end{align}
 with
\begin{align}\label{eq::updateVectSpace}\mathcal{M}_{i}\!=\!\Big\{ \vect{w}\!\in\! [0,1]^n \Big|\,\vect{1}^\top.\vect{w}\leq \kappa_i \,\,, [\vect{w}]_p=0,\,\, \forall p \in \mathcal{P}\backslash\mathcal{P}_{i} \Big\}.
\end{align}

Because the propagation is only based on the local information of agent $i$, next, we \emph{update} the propagated $\vect{x}_i^-(t+1)$ of each agent $i\in\mathcal{A}$ by element-wise maximum seeking among its neighbors, i.e.,
\begin{align}\label{eq::updateRule2}
    \vect{x}_i(t+1) = \underset{j \in \mathcal{N}_i \cup \{i\}}{\textup{max}} \vect{x}_j^-(t+1).
\end{align}
 Lemma~\ref{lem::maxVect} below, whose proof is given in the Appendix A, shows that, as expected, \begin{align*}\vect{x}_{ii}(t)=\vect{x}_{ii}^-(t),\quad i\in\mathcal{A},\end{align*}
i.e., the corrected component of $\vect{x}_{i}$ corresponding to agent $i$ itself is the propagated value maintained at agent $i$, and not the estimated value of any of its neighbors. 

\begin{lem}\label{lem::maxVect}{\it
Let the agents propagate and update their local membership probability vector according to~\eqref{eq::updateRule1} and~\eqref{eq::updateRule2}. Let \begin{align}\label{eq::x_bar}\bar{\vect{x}}(t) = \underset{i \in \mathcal{A}}{\textup{max}} \,\, \vect{x}_i(t).
\end{align}
Then, at any $t\in\{0,1,\cdots,T\}$, we have 
\begin{align}\label{eq::x_bar_value}
\bar{\vect{x}}(t)=[\vect{x}_{11}^\top(t),\cdots,\vect{x}_{NN}^\top(t)]^\top.
\end{align} 
}
\end{lem}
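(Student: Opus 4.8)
The plan is to prove \eqref{eq::x_bar_value} by induction on $t$, carrying the stronger invariant that every agent's estimate of any block is dominated element-wise by the owner's diagonal block. Concretely, I would show that for all $t\in\{0,\dots,T\}$ and all $i,j\in\mathcal{A}$ the sub-vector of $\vect{x}_i(t)$ indexed by $\mathcal{P}_j$ satisfies $\hat{\vect{x}}_{ij}(t)\leq\vect{x}_{jj}(t)$ (with the convention $\hat{\vect{x}}_{jj}=\vect{x}_{jj}$). Since $\vect{x}_{jj}(t)$ is itself the block-$j$ part of $\vect{x}_j(t)$, this invariant is exactly equivalent to the statement that the element-wise maximum $\bar{\vect{x}}(t)=\max_{i}\vect{x}_i(t)$ restricted to block $j$ equals $\vect{x}_{jj}(t)$, i.e.\ to \eqref{eq::x_bar_value}. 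The base case $t=0$ is immediate because every agent initializes at $\vect{x}_i(0)=\vect{0}$.

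The structural fact that drives the induction is that the propagation step \eqref{eq::updateRule1} modifies only agent $i$'s own block. Indeed, by \eqref{eq::updateVectSpace} the search direction $\widetilde{\vect{v}}_i(t)$ of \eqref{eq::updateVect} is supported on $\mathcal{P}_i$ and is non-negative, so \eqref{eq::updateRule1} gives $\hat{\vect{x}}_{ij}^-(t+1)=\hat{\vect{x}}_{ij}(t)$ for every $j\neq i$, while the block-$i$ part obeys $\vect{x}_{ii}^-(t+1)\geq\vect{x}_{ii}(t)$; in particular the diagonal block is non-decreasing in $t$.

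Granting the invariant at time $t$, I would first establish the auxiliary claim $\vect{x}_{ii}(t+1)=\vect{x}_{ii}^-(t+1)$ quoted in the text preceding the lemma. In the max-update \eqref{eq::updateRule2} restricted to block $i$, each neighbor $l\in\mathcal{N}_i$ contributes $\hat{\vect{x}}_{li}^-(t+1)=\hat{\vect{x}}_{li}(t)\leq\vect{x}_{ii}(t)\leq\vect{x}_{ii}^-(t+1)$, using the invariant and the monotonicity just noted, whereas agent $i$ contributes exactly $\vect{x}_{ii}^-(t+1)$; hence the maximum is attained at $l=i$, proving the claim. To then propagate the invariant, I would fix any $i,j$ and expand block $j$ of \eqref{eq::updateRule2}: the term $l=j$, if it appears, equals $\vect{x}_{jj}^-(t+1)=\vect{x}_{jj}(t+1)$ by the auxiliary claim, and every other term equals $\hat{\vect{x}}_{lj}(t)\leq\vect{x}_{jj}(t)\leq\vect{x}_{jj}(t+1)$ by the invariant and monotonicity; taking the maximum yields $\hat{\vect{x}}_{ij}(t+1)\leq\vect{x}_{jj}(t+1)$, closing the induction.

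The only real subtlety is the choice of the right invariant; once the domination property $\hat{\vect{x}}_{ij}\leq\vect{x}_{jj}$ is isolated, each step is a one-line consequence of the support constraint in $\mathcal{M}_i$ together with the non-negativity and monotonicity of the updates. I would also point out that graph connectivity plays no role in this particular lemma — the result holds for the update pair \eqref{eq::updateRule1}--\eqref{eq::updateRule2} over an arbitrary neighbor structure — precisely because only agent $j$ ever raises the value of its own block, so no stale neighbor estimate can ever exceed it.
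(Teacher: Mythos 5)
Your proof is correct and follows essentially the same route as the paper's: the paper likewise argues that $\widetilde{\vect{v}}_i(t)$ is nonnegative and supported on $\mathcal{P}_i$, so that only agent $i$ ever increases its own block $\vect{x}_{ii}$ while the other agents merely copy stale (hence dominated) values, yielding $\hat{\vect{x}}_{ji}(t)\leq\vect{x}_{ii}(t)$. Your version simply makes that argument rigorous by isolating the inductive invariant $\hat{\vect{x}}_{ij}(t)\leq\vect{x}_{jj}(t)$ and the auxiliary identity $\vect{x}_{ii}(t+1)=\vect{x}_{ii}^-(t+1)$, which the paper asserts informally.
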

After the update~\eqref{eq::updateRule2}, at each time step $t$ each agent has a local membership probability vector of the strategies $\vect{x}_i(t)$, which is not necessarily the same for all $i\in\mathcal{A}$. The following result, whose proof is given in Appendix B, establishes the difference between the agents' local copies of the membership probability~vector.

\begin{prop} \label{thm::convergance}{\it
Let the agents propagate and update their local membership probability vector according to~\eqref{eq::updateRule1} and~\eqref{eq::updateRule2}.  Then, the membership probability $\vect{x}_i(t)$, $t\in\{0,\cdots,T\}$, for each agent $i \in \mathcal{A}$ satisfies
\begin{subequations}\label{eq::Pro51}
\begin{align}
  & 0\leq \frac{1}{\kappa}\vect{1}.(\bar{\vect{x}}(t)-\vect{x}_i(t)) \leq \frac{1}{T} d(\mathcal{G}),\label{eq::Pro51_a}\\
    &\bar{\vect{x}}(t+1) - \bar{\vect{x}}(t) = \frac{1}{T} \sum\nolimits_{i \in \mathcal{A}}\! \widetilde{\vect{v}}_i(t),\label{eq::Pro51_b}\\
    &\frac{1}{\kappa_i}\vect{1}.(\bar{\vect{x}}(t+1) - \bar{\vect{x}}(t)) =\frac{1}{T},
    \label{eq::Pro51_c}
\end{align}
\end{subequations}
where $\kappa = \SUM{i \in \mathcal{A}}{} \kappa_i$.}
\end{prop}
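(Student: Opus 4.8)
The plan is to exploit the agent-wise block structure of the iterates together with Lemma~\ref{lem::maxVect}. Write $\vect{x}_i(t)$ block-wise as $[\hat{\vect{x}}_{i1}^\top,\dots,\vect{x}_{ii}^\top,\dots,\hat{\vect{x}}_{iN}^\top]^\top$, and recall from~\eqref{eq::updateVectSpace} that $\widetilde{\vect{v}}_i(t)$ is supported only on the $\mathcal{P}_i$-block. Consequently the propagation~\eqref{eq::updateRule1} modifies only the own-block $\vect{x}_{ii}$ of agent $i$, leaving every estimate block $\hat{\vect{x}}_{ij}$, $j\neq i$, untouched; the estimate blocks change solely through the max-consensus step~\eqref{eq::updateRule2}. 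Two elementary facts will be used repeatedly: (i) each own-block is non-decreasing in $t$, since $\widetilde{\vect{v}}_i(t)\ge \vect{0}$ (the estimated gradient~\eqref{eq::emperic_nabla_F} is nonnegative by monotonicity of $f$); and (ii) because the objective of~\eqref{eq::updateVect} is a nonnegative linear functional maximized over the uniform-matroid polytope $\mathcal{M}_i$ with $|\mathcal{P}_i|\ge\kappa_i$, its maximizer saturates the budget, i.e.\ $\vect{1}^\top\widetilde{\vect{v}}_i(t)=\kappa_i$, whence $\vect{1}^\top\vect{x}_{ii}(t)=\kappa_i t/T$.

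I would prove~\eqref{eq::Pro51_b} first, as it is purely structural. By Lemma~\ref{lem::maxVect}, $\bar{\vect{x}}(t)$ stacks the own-blocks $\vect{x}_{jj}(t)$, and by the same lemma the max-consensus step does not alter an agent's own block, so $\vect{x}_{jj}(t+1)=\vect{x}_{jj}^-(t+1)=\vect{x}_{jj}(t)+\tfrac1T\widetilde{\vect{v}}_j(t)|_{\mathcal{P}_j}$. Stacking these identities and using that the blocks $\widetilde{\vect{v}}_j(t)$ have pairwise-disjoint supports yields $\bar{\vect{x}}(t+1)-\bar{\vect{x}}(t)=\tfrac1T\sum_{j}\widetilde{\vect{v}}_j(t)$, which is~\eqref{eq::Pro51_b}. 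Then~\eqref{eq::Pro51_c} follows by applying $\vect{1}^\top$ to~\eqref{eq::Pro51_b} and invoking the budget saturation $\vect{1}^\top\widetilde{\vect{v}}_i(t)=\kappa_i$: the right-hand side becomes $\tfrac1T\sum_{i\in\mathcal{A}}\kappa_i=\kappa/T$, so dividing by the total budget $\kappa$ gives $\tfrac1T$.

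For~\eqref{eq::Pro51_a}, the lower bound is immediate since $\bar{\vect{x}}(t)=\max_{i}\vect{x}_i(t)\ge\vect{x}_i(t)$ entrywise, so $\bar{\vect{x}}(t)-\vect{x}_i(t)\ge\vect{0}$. The upper bound is the crux. I would show, by induction on $t$, the sandwich $\vect{x}_{jj}(\max\{t-d_{ij},0\})\le\hat{\vect{x}}_{ij}(t)\le\vect{x}_{jj}(t)$ entrywise for every $i,j$, where $d_{ij}$ is the graph distance between $i$ and $j$. The upper inequality holds because every estimate block is a maximum over delayed copies of the non-decreasing sequence $\vect{x}_{jj}(\cdot)$ and hence cannot exceed its current value; the lower inequality expresses that under max-consensus the source value $\vect{x}_{jj}$ propagates at least one hop per iteration, so agent $i$ has received agent $j$'s value of at least $d_{ij}$ steps earlier. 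Granting the sandwich, the $\mathcal{P}_j$-block of $\bar{\vect{x}}(t)-\vect{x}_i(t)$ (which is $\vect{x}_{jj}(t)-\hat{\vect{x}}_{ij}(t)$, and $\vect{0}$ when $j=i$) satisfies $\vect{0}\le \vect{x}_{jj}(t)-\hat{\vect{x}}_{ij}(t)\le \vect{x}_{jj}(t)-\vect{x}_{jj}(\max\{t-d_{ij},0\})$. Applying $\vect{1}^\top$ and fact (ii) bounds its mass by $\kappa_j d_{ij}/T\le \kappa_j\,d(\mathcal{G})/T$. Summing over $j\in\mathcal{A}\setminus\{i\}$ and using $\sum_{j\neq i}\kappa_j\le\kappa$ gives $\vect{1}^\top(\bar{\vect{x}}(t)-\vect{x}_i(t))\le \kappa\,d(\mathcal{G})/T$; dividing by $\kappa$ yields~\eqref{eq::Pro51_a}.

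The main obstacle is the max-consensus sandwich. Care is required with the initial transient $t<d_{ij}$ (where the delayed index is clamped to $0$ and the estimate is still $\vect{0}$), and with turning the entrywise, max-nonlinear recursion~\eqref{eq::updateRule2} into a clean induction: one must check that the maximizing neighbor is (a copy of) the one lying on a shortest path toward $j$, which is where the monotonicity (fact (i)) of $\vect{x}_{jj}$ is essential---monotonicity guarantees that the freshest available copy is also the largest, so the max-operation indeed tracks the least-delayed copy rather than a stale value. Everything else is bookkeeping over the disjoint block supports.
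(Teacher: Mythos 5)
Your proposal is correct and follows essentially the same route as the paper: budget saturation $\vect{1}^\top\widetilde{\vect{v}}_i(t)=\kappa_i$ giving $\vect{1}^\top\vect{x}_{ii}(t)=\kappa_i t/T$, the disjoint-block stacking via Lemma~\ref{lem::maxVect} for \eqref{eq::Pro51_b}--\eqref{eq::Pro51_c}, and the delayed-copy sandwich $\vect{x}_{jj}(t-d(\mathcal{G}))\le\hat{\vect{x}}_{ij}(t)\le\vect{x}_{jj}(t)$ summed over blocks for \eqref{eq::Pro51_a}. Your version is in fact slightly more careful than the paper's, since you state the per-pair delay $d_{ij}$, flag the induction needed to justify the one-hop-per-iteration propagation under max-consensus, and handle the initial transient, all of which the paper asserts without detail.
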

Next lemma, whose proof is given in Appendix A, establishes that both  $\vect{x}_i(t)$ and $\bar{\vect{x}}(t)$ belong to $\mathcal{M}$ for any $t\in\{0,\cdots,T\}$. 
\begin{lem}\label{lem::x_update_property}
Let the agents propagate and update their local membership probability vector according to~\eqref{eq::updateRule1} and~\eqref{eq::updateRule2}. Then, (a) $\vect{x}_i(t), \,\,\, \bar{\vect{x}}(t)\in\mathcal{M}$ at any $t\in\{0,1,\cdots,T\}$; (b) $\vect{1}.\vect{x}_{ii}(T) =\kappa_i$,  $\vect{1}.\hat{\vect{x}}_{ij}(T) \leq \kappa_j$, $j \in \mathcal{A}\setminus \{i\}$, and $\vect{x}_{i}(T)\in[0,1]^n$.
\end{lem}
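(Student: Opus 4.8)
The plan is to derive both claims from two structural facts that are already available: the closed form for the running maximum $\bar{\vect{x}}(t)$ (from Lemma~\ref{lem::maxVect} and Proposition~\ref{thm::convergance}) together with the entrywise dominance $\vect{0}\le\vect{x}_i(t)\le\bar{\vect{x}}(t)$, which is immediate from the definition $\bar{\vect{x}}(t)=\max_{i\in\mathcal{A}}\vect{x}_i(t)$ in~\eqref{eq::x_bar}. Before using these, I would pin down the increment vectors $\widetilde{\vect{v}}_i(t)$. Since $f$ is monotone increasing, each summand $f(\mathcal{R}^k_i\cup\{p\})-f(\mathcal{R}^k_i\setminus\{p\})$ in~\eqref{eq::emperic_nabla_F} is nonnegative, so $\widetilde{\nabla F}(\vect{x}_i(t))\ge\vect{0}$ entrywise. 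Maximizing a linear functional with nonnegative coefficients over the polytope $\mathcal{M}_i$ in~\eqref{eq::updateVectSpace} is attained at a vertex, namely the indicator of the $\kappa_i$ largest-gradient coordinates of $\mathcal{P}_i$; hence $\widetilde{\vect{v}}_i(t)\in\{0,1\}^n$ is supported on $\mathcal{P}_i$, and, since $\kappa_i\le|\mathcal{P}_i|$, it saturates the budget, $\vect{1}.\widetilde{\vect{v}}_i(t)=\kappa_i$.

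For part (a) I would telescope~\eqref{eq::Pro51_b} from $\bar{\vect{x}}(0)=\vect{0}$ to obtain $\bar{\vect{x}}(t)=\frac1T\sum_{s=0}^{t-1}\sum_{i\in\mathcal{A}}\widetilde{\vect{v}}_i(s)$. Because each $\widetilde{\vect{v}}_i(s)$ is supported on $\mathcal{P}_i$, only $i=k$ contributes to block $\mathcal{P}_k$, so the block-$k$ sum of $\bar{\vect{x}}(t)$ equals $\frac1T\sum_{s=0}^{t-1}\vect{1}.\widetilde{\vect{v}}_k(s)=\frac{t\kappa_k}{T}\le\kappa_k$ for $t\le T$, while each entry is at most $t/T\le1$; hence $\bar{\vect{x}}(t)\in\mathcal{M}$. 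Membership $\vect{x}_i(t)\in\mathcal{M}$ then follows for free: a one-line induction (start at $\vect{x}_i(0)=\vect{0}$, add the nonnegative $\tfrac1T\widetilde{\vect{v}}_i$, take maxima) gives $\vect{x}_i(t)\ge\vect{0}$, and since $\vect{x}_i(t)\le\bar{\vect{x}}(t)$ entrywise and the partition-matroid constraints in~\eqref{eq::convexhull} are coordinatewise monotone, every block sum and every entry of $\vect{x}_i(t)$ is dominated by that of $\bar{\vect{x}}(t)$.

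For part (b), Lemma~\ref{lem::maxVect} identifies block $i$ of $\bar{\vect{x}}(t)$ with $\vect{x}_{ii}(t)$, so the telescoped form yields $\vect{x}_{ii}(T)=\frac1T\sum_{s=0}^{T-1}\widetilde{\vect{v}}_i(s)$ and therefore $\vect{1}.\vect{x}_{ii}(T)=\frac1T\sum_{s=0}^{T-1}\kappa_i=\kappa_i$; this is precisely where the exact saturation $\vect{1}.\widetilde{\vect{v}}_i(s)=\kappa_i$ from the first paragraph is needed. The bound $\vect{1}.\hat{\vect{x}}_{ij}(T)\le\kappa_j$ is just the block-$j$ matroid constraint of $\vect{x}_i(T)\in\mathcal{M}$ established in (a), since $\hat{\vect{x}}_{ij}(T)$ is exactly the block-$j$ sub-vector of $\vect{x}_i(T)$; and $\vect{x}_i(T)\in[0,1]^n$ is immediate from $\vect{x}_i(T)\in\mathcal{M}\subset[0,1]^n$.

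The step I expect to be the genuine obstacle is ensuring that the max-consensus update~\eqref{eq::updateRule2} keeps the iterates inside $\mathcal{M}$: the entrywise maximum of two feasible points of a partition-matroid polytope is in general infeasible, e.g.\ for a cardinality-one block $\max\{(1,0),(0,1)\}=(1,1)$ overflows the constraint. The escape is to avoid reasoning about the max directly and instead lean on Lemma~\ref{lem::maxVect}: because every agent's estimate of block $j$ is a delayed copy of the single coherent, monotonically increasing trajectory $\vect{x}_{jj}$, the running maximum $\bar{\vect{x}}(t)$ inherits the clean block-diagonal form of~\eqref{eq::x_bar_value} and never superimposes conflicting masses. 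Consequently all feasibility rides on the dominance $\vect{x}_i(t)\le\bar{\vect{x}}(t)$ together with the closed form of $\bar{\vect{x}}(t)$, which is why I would set up those two ingredients first.
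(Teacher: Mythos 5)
Your proof is correct, and it reaches the same invariant as the paper --- $\vect{1}.\vect{x}_{ii}(t)=\tfrac{t}{T}\kappa_i$ with $\vect{1}.\widetilde{\vect{v}}_i(t)=\kappa_i$ --- but it organizes the argument differently. The paper runs a direct induction on each agent's full vector $\vect{x}_i(t)$, carrying the block-sum bounds $\vect{1}.\vect{x}_{ii}(t)=\tfrac{t}{T}\kappa_i$ and $\vect{1}.\hat{\vect{x}}_{ij}(t)\leq\tfrac{t}{T}\kappa_j$ through both the propagation and the max-update step, and disposes of the max step with the (rather terse) remark that the $\mathcal{M}_i$ are disjoint subspaces of $\mathcal{M}$. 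You instead establish feasibility only for the running maximum $\bar{\vect{x}}(t)$, by telescoping~\eqref{eq::Pro51_b} into a closed form, and then obtain $\vect{x}_i(t)\in\mathcal{M}$ for every agent from the entrywise dominance $\vect{0}\leq\vect{x}_i(t)\leq\bar{\vect{x}}(t)$ together with the downward-closedness of the polytope~\eqref{eq::convexhull}. This buys two things: it makes explicit why the max-consensus step cannot overflow the matroid constraints (your counterexample $\max\{(1,0),(0,1)\}=(1,1)$ is exactly the danger the paper's proof glosses over, and your appeal to the block-coherent form~\eqref{eq::x_bar_value} of Lemma~\ref{lem::maxVect} is the right resolution), and it reduces the per-agent bookkeeping to a one-line monotonicity observation. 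The only mild caveat is that your route leans on Proposition~\ref{thm::convergance} for~\eqref{eq::Pro51_b}, which the paper's self-contained induction does not need; since that proposition is proved independently of this lemma, there is no circularity, and both arguments share the same underlying convention that the $\arg\max$ in~\eqref{eq::updateVect} is realized as the indicator of the $\kappa_i$ largest-gradient coordinates, which is what forces the exact saturation $\vect{1}.\vect{x}_{ii}(T)=\kappa_i$ in part (b).
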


The following theorem, whose proof is given in Appendix A, quantifies the optimality of $\bar{\vect{x}}(T) \in \mathcal{M}$ evaluated by the multilinear-extension function $F$. 

\begin{thm}[Optimality gap]\label{thm::main}
Let the agents propagate and update their local membership probability vector according to~\eqref{eq::updateRule1} and~\eqref{eq::updateRule2}. 
Let $\kappa = \SUM{i \in \mathcal{A}}{} \kappa_i$, and $\mathcal{R}^\star$ be the optimizer of problem~\eqref{eq::mainProblem}.~Then, the admissible strategy set $\bar{\mathcal{P}}=\bigcup_{i \in \mathcal{A}} \bar{\mathcal{P}}_i$, 
\begin{align}\label{eq::bound_F_xbar_T}
\frac{1}{c}(1\!-\!{\textup{e}}^{-c})(1\!-\!(2 \,c\,\kappa\, d(\mathcal{G})\!+\!\frac{c\,\kappa}{2}+1)\frac{\kappa}{T})f(\mathcal{R}^\star) \!\leq F(\bar{\vect{x}}(T))
\end{align}
 with the probability of $\left(\prod_{i\in\mathcal{A}} (1-2\textup{e}^{-\frac{1}{8T^2}K_i})^{|\mathcal{P}_i|}\right)^T$.
\end{thm}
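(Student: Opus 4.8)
The plan is to track the multilinear objective $F(\bar{\vect{x}}(t))$ along the iterations and derive a one-step recursion of the form $F(\bar{\vect{x}}(t+1)) \geq (1-\tfrac{c}{T})F(\bar{\vect{x}}(t)) + \tfrac{1}{T}(f(\mathcal{R}^\star) - E)$, in direct analogy with the continuous-time argument that yields $\frac{1}{c}(1-\textup{e}^{-c})$, but now carrying three discrete error sources: the second-order (discretization) error of the finite step $\tfrac1T$, the gradient sampling error quantified by Lemma~\ref{lem::gradient_estimate}, and the disagreement (consensus) error between $\vect{x}_i(t)$ and $\bar{\vect{x}}(t)$ quantified by~\eqref{eq::Pro51_a}. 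A recurring ingredient is that the off-diagonal entries of $\nabla^2 F$ are nonpositive (submodularity) while the diagonal vanishes ($F$ is multilinear), so $F$ is concave along any nonnegative direction; moreover, reading~\eqref{eq::secondDer} through the curvature definition~\eqref{eq::totalCurvature} gives the uniform bound $|\partial^2 F/\partial[\vect{x}]_p\partial[\vect{x}]_q| \leq c\, f(\mathcal{R}^\star)$, which is what injects the curvature $c$ into the error constants.

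I would first establish the per-step increment. Since $\bar{\vect{x}}(t+1)-\bar{\vect{x}}(t) = \tfrac1T\sum_{i\in\mathcal{A}}\widetilde{\vect{v}}_i(t) \geq \vect{0}$ by~\eqref{eq::Pro51_b}, concavity along this nonnegative direction plus a second-order remainder controlled by the Hessian bound gives
\begin{align*}
F(\bar{\vect{x}}(t+1))-F(\bar{\vect{x}}(t)) \geq \tfrac1T\sum_{i\in\mathcal{A}}\nabla F(\bar{\vect{x}}(t)).\widetilde{\vect{v}}_i(t) - \tfrac{c\,\kappa^2}{2T^2}f(\mathcal{R}^\star),
\end{align*}
where $\vect{1}.\widetilde{\vect{v}}_i(t)\leq\kappa_i$ (by~\eqref{eq::updateVectSpace}) and $\sum_i\kappa_i=\kappa$ bound the quadratic form. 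Next I would convert $\nabla F(\bar{\vect{x}}(t)).\widetilde{\vect{v}}_i(t)$ into the quantity actually optimized in~\eqref{eq::updateVect}, namely $\widetilde{\nabla F}(\vect{x}_i(t)).\widetilde{\vect{v}}_i(t)$. Swapping $\bar{\vect{x}}(t)$ for $\vect{x}_i(t)$ costs at most the Hessian bound times $\|\bar{\vect{x}}(t)-\vect{x}_i(t)\|_1$, and~\eqref{eq::Pro51_a} gives $\|\bar{\vect{x}}(t)-\vect{x}_i(t)\|_1 \leq \kappa\, d(\mathcal{G})/T$; swapping the exact gradient for its estimate costs at most $\kappa_i\cdot\tfrac{1}{2T}f(\mathcal{R}^\star)$ by Lemma~\ref{lem::gradient_estimate}. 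These two substitutions, summed over $i$, produce the $2c\kappa d(\mathcal{G})$ and the $+1$ contributions to~\eqref{eq::bound_F_xbar_T}.

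With the greedy direction in hand, optimality of $\widetilde{\vect{v}}_i(t)$ over $\mathcal{M}_i$ together with feasibility of $\vect{1}_{\mathcal{R}^\star\cap\mathcal{P}_i}\in\mathcal{M}_i$ gives $\widetilde{\nabla F}(\vect{x}_i(t)).\widetilde{\vect{v}}_i(t) \geq \widetilde{\nabla F}(\vect{x}_i(t)).\vect{1}_{\mathcal{R}^\star\cap\mathcal{P}_i}$; reversing the same two substitutions on the right-hand side and summing over $i$ (using $\sum_{i}\vect{1}_{\mathcal{R}^\star\cap\mathcal{P}_i}=\vect{1}_{\mathcal{R}^\star}$ since the $\mathcal{P}_i$ partition $\mathcal{P}$) lets me invoke Lemma~\ref{lm::optimalInEq} to obtain $\sum_i\nabla F(\bar{\vect{x}}(t)).\widetilde{\vect{v}}_i(t) \geq f(\mathcal{R}^\star)-c\,F(\bar{\vect{x}}(t)) - (\textrm{errors})$. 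Substituting this back yields the advertised recursion with $E=(2c\kappa d(\mathcal{G})+\tfrac{c\kappa}{2}+1)\kappa\, f(\mathcal{R}^\star)/T$ collecting all per-step errors. Solving the linear recursion from $\bar{\vect{x}}(0)=\vect{0}$, $F(\vect{0})=0$, gives $F(\bar{\vect{x}}(T)) \geq \tfrac1c(1-(1-\tfrac cT)^T)(f(\mathcal{R}^\star)-E) \geq \tfrac1c(1-\textup{e}^{-c})(f(\mathcal{R}^\star)-E)$, and rearranging reproduces~\eqref{eq::bound_F_xbar_T}. Finally, the sampling substitutions require the event of Lemma~\ref{lem::gradient_estimate} to hold simultaneously for the $|\mathcal{P}_i|$ coordinates each agent $i$ needs, across all $i\in\mathcal{A}$ and all $T$ steps; by independence this succeeds with probability $\left(\prod_{i\in\mathcal{A}}(1-2\textup{e}^{-K_i/(8T^2)})^{|\mathcal{P}_i|}\right)^T$.

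I expect the main obstacle to be the consensus step: turning the normalized disagreement bound~\eqref{eq::Pro51_a} into a bound on $|\nabla F(\bar{\vect{x}}(t)).\widetilde{\vect{v}}_i(t) - \nabla F(\vect{x}_i(t)).\widetilde{\vect{v}}_i(t)|$ with exactly the constant $2c\kappa d(\mathcal{G})$, since this requires combining the componentwise ordering $\bar{\vect{x}}(t)\geq\vect{x}_i(t)$ (from Lemma~\ref{lem::maxVect}), the curvature-based Hessian bound, and the fact that both $\bar{\vect{x}}(t)$ and $\vect{x}_i(t)$ remain in $\mathcal{M}$ (Lemma~\ref{lem::x_update_property}). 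Keeping the accumulated discretization and curvature factors consistent so that the recursion collapses cleanly to the stated $\frac{\kappa}{T}$-order error is the delicate bookkeeping that I anticipate being the crux of the argument.
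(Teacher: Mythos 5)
Your proposal is correct and follows essentially the same route as the paper's proof: the same per-step inequality $F(\bar{\vect{x}}(t+1))-F(\bar{\vect{x}}(t))\geq \frac{1}{T}\sum_{i}\nabla F(\bar{\vect{x}}(t)).\widetilde{\vect{v}}_i(t)-\frac{c\kappa^2}{2T^2}f(\mathcal{R}^\star)$ via the curvature-based Hessian bound, the same two forward-and-back substitutions (consensus disagreement via~\eqref{eq::Pro51_a} and sampling error via Lemma~\ref{lem::gradient_estimate}) yielding the $2c\kappa d(\mathcal{G})$ and $+1$ constants, the same comparison with $\vect{1}_{\mathcal{R}^\star}=\sum_i \vect{1}_{\mathcal{R}^\star\cap\mathcal{P}_i}$ through Lemma~\ref{lm::optimalInEq}, and the same linear recursion solved from $F(\vect{0})=0$ with the identical union-of-events probability accounting. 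No substantive differences to report.
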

Notice that since $$1-2T\,n\, \textup{e}^{-\frac{1}{8T^2}\underline{K}}\leq \left(\prod\nolimits_{i\in\mathcal{A}} (1-2\textup{e}^{-\frac{1}{8T^2}K_i})^{|\mathcal{P}_i|}\right)^T,$$ where $\underline{K}=\min\{K_1,\cdots,K_N\}$, 
the probability of the bound~\eqref{eq::bound_F_xbar_T} improves as $T$, and the number of the samples collected by the agents $\underline{K}$ increase.

The final output of a distributed solver for problem~\eqref{eq::mainProblem} must be a set $\bar{\mathcal{R}}$ satisfying~\eqref{eq::mainProblem_b}. Recall that strategies corresponding to the vertices of the matroid polytope $\mathcal{M}$ belong to admissible strategy set of~\eqref{eq::mainProblem}. However, $\bar{\vect{x}}(T)$ is a fractional point in $ \mathcal{M}$. Moreover, only part of $\bar{\vect{x}}(T)$ is available at each agent $i\in\mathcal{A}$. In what follows, we propose a distributed stochastic Pipage rounding to move $\bar{\vect{x}}(T)$ to a a vertex of $\mathcal{M}$ starting from $\vect{x}_i(T)$. 

\subsubsection{Distributed Pipage rounding procedure}
Let each agent $i\in\mathcal{A}$ initialize its local rounded membership vector $\vect{y}_i\in\real^{n}$ at $\vect{y}_i(0)=\vect{x}_i(T)$. Then, by virtue of Lemma~\ref{lem::x_update_property}, we have $\vect{y}_i(0)\in[0,1]^n$, $i\in\mathcal{A}$. Following a Pipage type rounding procedure, each agent 
 $i \in \mathcal{A}$ at each iteration picks two two local strategies $p,q \in \mathcal{P}_i$ for which  $[\vect{y}_i(\tau)]_p,[\vect{y}_i(\tau)]_q \in (0,1)$, and preforms 
\begin{align}\label{eq::dist_pipage}
 &   \!\!\!    \begin{cases}
     [\vect{y}_i(\tau+1)]_p = [\vect{y}_i(\tau)]_p \!-\! \delta_p(\tau), \\
     [\vect{y}_i(\tau+1)]_q = [\vect{y}_i(\tau)]_q + \delta_p(\tau),
     \end{cases} \!\!\!\textup{w.p.}~\frac{\delta_q(\tau)}{\delta_p(\tau)\!+\!\delta_q(\tau)},\nonumber\\
  &     \!\!\!    \begin{cases}
    [\vect{y}_i(\tau+1)]_q = [\vect{y}_i(\tau)]_q\!-\! \delta_q(\tau), \\
     [\vect{y}_i(\tau+1)]_p = [\vect{y}_i(\tau)]_p + \delta_q(\tau), 
        \end{cases}\!\!\! \textup{w.p.}~ \frac{\delta_p(\tau)}{\delta_p(\tau)\!+\!\delta_q(\tau)},\nonumber\\
    &     
~\,[\vect{y}_i(\tau+1)]_l=[\vect{y}_i(\tau)]_l,\quad l\in\{1,\cdots,n\}\backslash\{p,q\},
\end{align}
where $\delta_p(\tau)=\textup{min}([\vect{y}_i(\tau)]_p,1-[\vect{y}_i(\tau)]_q)$ and $\delta_q(\tau)=\textup{min}(1-[\vect{y}_i(\tau)]_p,[\vect{y}_i(\tau)]_q)$. Here, `w.p.' stands for `with probability of'. The following proposition gives the convergence result of distributed Pipage rounding~\eqref{eq::dist_pipage}. In what follows, we partition $\vect{y}_i(\tau)$ as $\vect{y}_i(\tau)=[\hat{\vect{y}}_{i1}^\top(\tau),\cdots,\vect{y}_{ii}^\top(\tau),\cdots,\hat{\vect{y}}_{iN}^\top(\tau)]^\top\in[0,1]^n$, $i\in\mathcal{A}$, where $\vect{y}_{ij}(\tau)\in\real^{|\mathcal{P}_j|}$ for $j\in\mathcal{A}$.

\begin{prop}\label{prop::rounding_prop}
Starting from $\vect{y}_i(0)=\vect{x}_i(T)$, let each agent $i\in\mathcal{A}$ implement the rounding procedure~\eqref{eq::dist_pipage}. Then, 
$[\vect{y}_{ii}(|\mathcal{P}_i|)]_r \in \{0,1\}, \,\, r \in \mathcal{P}_i$ and $\vect{1}.\vect{y}_{ii}(|\mathcal{P}_i|)=\kappa_i$. Moreover, 
$\bar{\vect{y}} = [\vect{y}_{11}(|\mathcal{P}_1|),\vect{y}_{22}(|\mathcal{P}_2|),\cdots,\vect{y}_{NN}(|\mathcal{P}_N|)]$ is a vertex of $\mathcal{M}$.
\end{prop}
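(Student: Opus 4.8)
The plan is to track two invariants of the update~\eqref{eq::dist_pipage} applied by agent $i$ to its own block $\vect{y}_{ii}$, and to show that every iteration preserves the coordinate sum while rounding at least one fractional entry, so that after at most $|\mathcal{P}_i|$ steps the block is integral. First I would observe that at iteration $\tau$ agent $i$ perturbs only the two entries $p,q\in\mathcal{P}_i$ of $\vect{y}_{ii}(\tau)$; every entry outside $\{p,q\}$, and in particular each off-diagonal block $\hat{\vect{y}}_{ij}$, is left untouched. In both probabilistic branches one entry is decreased by some $\delta$ and the other increased by the same $\delta$, so $\vect{1}.\vect{y}_{ii}(\tau+1)=\vect{1}.\vect{y}_{ii}(\tau)$. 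Since $\vect{y}_{ii}(0)=\vect{x}_{ii}(T)$ and Lemma~\ref{lem::x_update_property}(b) gives $\vect{1}.\vect{x}_{ii}(T)=\kappa_i$, induction on $\tau$ yields $\vect{1}.\vect{y}_{ii}(\tau)=\kappa_i$ throughout, which already establishes the sum claim $\vect{1}.\vect{y}_{ii}(|\mathcal{P}_i|)=\kappa_i$.

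Next I would carry out a per-branch analysis of the chosen fractional pair $[\vect{y}_i(\tau)]_p,[\vect{y}_i(\tau)]_q\in(0,1)$. Using $\delta_p=\min([\vect{y}_i(\tau)]_p,\,1-[\vect{y}_i(\tau)]_q)$, the branch that subtracts $\delta_p$ from $p$ and adds it to $q$ drives $p$ to $0$ when $\delta_p=[\vect{y}_i(\tau)]_p$ and drives $q$ to $1$ when $\delta_p=1-[\vect{y}_i(\tau)]_q$; in either subcase the complementary entry stays in $[0,1]$ precisely because $\delta_p$ is the \emph{smaller} of the two quantities in its definition. The symmetric check with $\delta_q=\min(1-[\vect{y}_i(\tau)]_p,\,[\vect{y}_i(\tau)]_q)$ handles the other branch. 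Hence each iteration keeps $\vect{y}_{ii}\in[0,1]^{|\mathcal{P}_i|}$ and turns at least one of $p,q$ into $0$ or $1$, so the number of fractional entries of $\vect{y}_{ii}$ never increases and drops by at least one whenever a pair of fractional entries can be selected. Combining this with the sum invariant, the fractional count can never equal exactly one, since an integer column sum $\kappa_i$ together with a single entry in the open interval $(0,1)$ is impossible; the count is therefore always $0$ or at least $2$. Starting from at most $|\mathcal{P}_i|$ fractional entries, it then reaches $0$ within $|\mathcal{P}_i|$ iterations, giving $[\vect{y}_{ii}(|\mathcal{P}_i|)]_r\in\{0,1\}$ for all $r\in\mathcal{P}_i$.

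Finally, to identify $\bar{\vect{y}}$ as a vertex of $\mathcal{M}$, I would assemble the blocks: by the two claims just proved, $\bar{\vect{y}}\in\{0,1\}^n$ and $\sum_{p\in\mathcal{P}_i}[\bar{\vect{y}}]_p=\kappa_i\le\kappa_i$ for every $i\in\mathcal{A}$, so $\bar{\vect{y}}$ is a feasible point of the polytope~\eqref{eq::convexhull}. Because all of its coordinates are $0$ or $1$, the $n$ box constraints $[\vect{x}]_p=[\bar{\vect{y}}]_p$ are active at $\bar{\vect{y}}$ and their normals are the standard basis vectors, hence linearly independent; thus $\bar{\vect{y}}$ is the unique solution of an $n\times n$ active system and so a vertex of $\mathcal{M}$.

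I expect the main obstacle to be the second paragraph. One must verify simultaneously, in both probabilistic branches, that the update remains inside $[0,1]^{|\mathcal{P}_i|}$ and rounds at least one coordinate, and then couple this ``at least one per step'' guarantee with the integer-sum invariant through the parity-type argument that forbids a single leftover fractional coordinate; this is exactly what makes termination within $|\mathcal{P}_i|$ iterations rigorous rather than heuristic.
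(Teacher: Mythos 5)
Your proof is correct and follows essentially the same route as the paper's: the zero-sum invariant $\vect{1}.\vect{y}_{ii}(\tau)=\kappa_i$ seeded by Lemma~\ref{lem::x_update_property}(b), the per-branch check that each step keeps $\vect{y}_{ii}\in[0,1]^{|\mathcal{P}_i|}$ while rounding at least one of the pair $p,q$, and the conclusion that an integral point of $\mathcal{M}$ is a vertex. Your parity observation — that the integer column sum forbids exactly one leftover fractional coordinate, so the fractional count drops from at least $2$ straight to $0$ or below — actually supplies the justification for termination within $|\mathcal{P}_i|$ iterations that the paper's proof asserts with only a bare ``consequently.''
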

\begin{proof}
Given the definition of $\delta_p(\tau)$ and $\delta_q(\tau)$, at each iteration of~\eqref{eq::dist_pipage}, either $[\vect{y}_{i}(\tau+1)]_p\in\{0,1\}$ or  $[\vect{y}_{i}(\tau+1)]_q\in\{0,1\}$. Moreover, $[\vect{y}_{i}(\tau+1)]_r\in[0,1]$ for any $r\in\mathcal{P}_i$. Consequently,  $[\vect{y}_{ii}(|\mathcal{P}_i|)]_r \in \{0,1\}, \,\, r \in \mathcal{P}_i$.
Next, note that since $\vect{y}_i(0)=\vect{x}_i(T)$, $i\in\mathcal{A}$, by virtue of Lemma~\ref{lem::x_update_property},  we have $\vect{1}.\vect{y}_{ii}(0) =\kappa_i$. Therefore, because~\eqref{eq::dist_pipage} is a zero-sum iteration, we have  $\vect{1}.\vect{y}_{ii}(\tau) =\kappa_i$, $i\in\mathcal{A}$ for any $\tau\in\mathbb{Z}_{\geq0}$, which confirms $\vect{1}.\vect{y}_{ii}(|\mathcal{P}_i|)=\kappa_i$ and $\bar{\vect{y}}\in\mathcal{M}$. Lastly, because $[\vect{y}_{ii}(|\mathcal{P}_i|)]_r \in \{0,1\}, \,\, r \in \mathcal{P}_i$ for any $i\in\mathcal{A}$, $\bar{\vect{y}}$ is a vertex of $\mathcal{M}$.
\end{proof}

Our distributed stochastic Pipage rounding procedure  concludes by each agent $i \in \mathcal{A}$ choosing its suboptimal strategy set according to 
\begin{align}\label{eq::R_bar_i}
&\bar{\mathcal{R}}_i=\mathcal{R}_{\bar{\vect{y}}_i},~~\text{where}\\
&~~\bar{\vect{y}}_i=[\vect{0}_{|\mathcal{P}_1|\times 1}^\top,\cdots,\vect{y}_{ii}(|\mathcal{P}_i|),\cdots,\vect{0}_{|\mathcal{P}_N|\times 1}^\top]^\top,\nonumber
\end{align}
with $\vect{y}_{ii}(|\mathcal{P}_i|)$ obtained from~\eqref{eq::dist_pipage} when it is initialized at $\vect{y}_i(0)=\vect{x}_i(T)$.
The following lemma, whose proof is given in Appendix A, determines the expected utility of the agents if the agents choose their local admissible strategies according to our proposed distributed stochastic Pipage rounding procedure. 

\begin{thm}[Distributed Pipage rounding]\label{lm::expectedInEq}
Let each agents $i\in \mathcal{A}$ choose its strategy set $\bar{\mathcal{R}}_i\in\mathcal{P}_i$ according to~\eqref{eq::R_bar_i}. Let $\bar{\mathcal{R}} = \bigcup_{i \in \mathcal{A}}\bar{\mathcal{R}}_i$. Then, 
\begin{align}\label{eq::rounding_gaurnatee}
    F(\bar{\vect{x}}(T))\leq \mathbb{E}[f(\bar{\mathcal{R}})].
\end{align}
\end{thm}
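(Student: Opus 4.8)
The plan is to show that a single randomized Pipage step never decreases the multilinear extension $F$ in expectation, and then to chain these one-step guarantees into a global martingale argument whose endpoints are exactly $\bar{\vect{x}}(T)$ and $\bar{\vect{y}}$. First I would fix the right object to track. By Lemma~\ref{lem::maxVect}, $\bar{\vect{x}}(T)=[\vect{x}_{11}^\top(T),\cdots,\vect{x}_{NN}^\top(T)]^\top$ is the concatenation of the agents' own blocks, and by construction each agent initializes $\vect{y}_{ii}(0)=\vect{x}_{ii}(T)$ and, in~\eqref{eq::dist_pipage}, only ever modifies coordinates inside its own block $\mathcal{P}_i$. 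Since the partition matroid has pairwise disjoint blocks $\mathcal{P}_i$, I would view the union of all agents' roundings as a single (global) Pipage process started at $\bar{\vect{x}}(T)$: applying, one at a time, each agent's coordinate-pair exchange to the global vector leaves every other block untouched, so the process terminates at $\bar{\vect{y}}=[\vect{y}_{11}^\top(|\mathcal{P}_1|),\cdots,\vect{y}_{NN}^\top(|\mathcal{P}_N|)]^\top$, which Proposition~\ref{prop::rounding_prop} guarantees is an integral vertex of $\mathcal{M}$.

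The two ingredients for the one-step bound are convexity of $F$ along an exchange direction and mean preservation of the step. For any pair $p\neq q$, I would show $F$ is convex along $\vect{1}_{\{p\}}-\vect{1}_{\{q\}}$: multilinearity of $F$ in~\eqref{eq::F_determin} makes the pure second derivatives vanish, $\partial^2F/\partial[\vect{x}]_p^2=\partial^2F/\partial[\vect{x}]_q^2=0$, while~\eqref{eq::secondDer}, rewritten with $S=\mathcal{R}_{\vect{x}}\setminus\{p,q\}$, reads $\partial^2F/\partial[\vect{x}]_p\partial[\vect{x}]_q=\mathbb{E}[\Delta_f(p|S\cup\{q\})-\Delta_f(p|S)]\leq 0$ by submodularity~\eqref{eq::submodularity}; hence the second directional derivative equals $-2\,\partial^2F/\partial[\vect{x}]_p\partial[\vect{x}]_q\geq 0$. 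Next I would verify that a step of~\eqref{eq::dist_pipage} is mean preserving: the two outcomes displace $\vect{y}_i(\tau)$ by $-\delta_p(\vect{1}_{\{p\}}-\vect{1}_{\{q\}})$ and $+\delta_q(\vect{1}_{\{p\}}-\vect{1}_{\{q\}})$ with probabilities $\delta_q/(\delta_p+\delta_q)$ and $\delta_p/(\delta_p+\delta_q)$, so the expected displacement is $(-\delta_p\delta_q+\delta_p\delta_q)/(\delta_p+\delta_q)=0$ and $\mathbb{E}[\vect{y}_i(\tau+1)\,|\,\vect{y}_i(\tau)]=\vect{y}_i(\tau)$. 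Restricting $F$ to the segment through $\vect{y}_i(\tau)$ in direction $\vect{1}_{\{p\}}-\vect{1}_{\{q\}}$ and applying Jensen's inequality to its convex restriction under this mean-preserving two-point law then yields $\mathbb{E}[F(\vect{y}_i(\tau+1))\,|\,\vect{y}_i(\tau)]\geq F(\vect{y}_i(\tau))$.

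Finally I would assemble the pieces. Applying the one-step bound to every step of the global process and iterating the tower property over all steps gives $\mathbb{E}[F(\bar{\vect{y}})]\geq F(\bar{\vect{x}}(T))$. Because $\bar{\vect{y}}$ is integral, evaluating~\eqref{eq::F_determin} at a $0/1$ vector collapses the sum to the single term indexed by its support, so $F(\bar{\vect{y}})=f(\mathcal{R}_{\bar{\vect{y}}})=f(\bar{\mathcal{R}})$ by the definition~\eqref{eq::R_bar_i}; taking expectations delivers the claim $F(\bar{\vect{x}}(T))\leq\mathbb{E}[f(\bar{\mathcal{R}})]$. I expect the main obstacle to be the reduction in the first paragraph: one must argue rigorously that the independent, per-agent roundings on disjoint blocks can be treated as a single sequential martingale on the global $F$, i.e., that step-by-step conditioning is legitimate and that the global endpoint is precisely $\bar{\vect{y}}$. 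Once that reduction is in place, the convexity and Jensen steps are routine given the established sign of the cross second derivative.
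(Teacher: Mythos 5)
Your proposal is correct and follows essentially the same route as the paper's proof: a mean-preserving two-point Pipage step combined with convexity of $F$ along the exchange direction $\vect{1}_{\{p\}}-\vect{1}_{\{q\}}$ gives the one-step supermartingale-type inequality $F(\vect{y}(\tau))\leq\mathbb{E}[F(\vect{y}(\tau+1))\,|\,\vect{y}(\tau)]$, which is chained over all agents' (disjoint-block) steps and closed out by the integrality of $\bar{\vect{y}}$ from Proposition~\ref{prop::rounding_prop}. The only cosmetic difference is that you obtain the directional convexity from the sign of the cross second derivative in~\eqref{eq::secondDer}, whereas the paper isolates it as Lemma~\ref{lem::prop_F_2} via direct expansion of the multilinear extension; the reduction of the per-agent roundings to a single global sequential process, which you flag as the main obstacle, is exactly what the paper's interleaved $\vect{y}(\tau)$ construction accomplishes.
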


\subsection{A minimal information implementation}\label{sec::implem}

\begin{algorithm}[t]
\caption{Discrete distributed implementation of the continuous greedy algorithm.} 

\label{alg:the_alg_decen_2}
{\small
\begin{algorithmic}[1]
\STATE ${\textbf{Init:}} ~~\mathcal{F}_1 \gets \emptyset,\cdots, \mathcal{F}_N \gets \emptyset,\,\, t \gets 1$,
\WHILE{$t\leq T$}
\FOR{$i\in \mathcal{A}$}
\STATE Draw $K_i$ sample strategy sets, $\{\mathcal{R}_i^k\}_{k=1}^{K_i}$ such that
 
\hspace{7pt}$q \in\mathcal{R}_i^k$  w.p. $\alpha$ for all $(q,\alpha)\in\mathcal{F}_i$.
\FOR{$p\in \mathcal{P}_i$}
\STATE Compute $w^i_p \sim \mathbb{E}[f(\mathcal{R} \cup \{p\})-f(\mathcal{R}\setminus \{p\})]$ using

  \hspace{15pt} the strategy sample sets of step $4$.
\ENDFOR
\STATE $\{p_1^\star,\cdots,p_{\kappa_i}^\star\}={\arg\max} \,\, (\{w_p^i\}_{p \in \mathcal{P}_i},\kappa_i)$
\STATE 
$\mathcal{F}_i^-\gets \mathcal{F}_i \oplus \left\{(p_1^\star,\frac{1}{T}),\cdots,(p_{\kappa_i}^\star,\frac{1}{T})\right\}$
\STATE Broadcast $\mathcal{F}_i^{-}$ to the neighbors $\mathcal{N}_i$.
\STATE 
$\mathcal{F}_i\gets\underset{j \in \mathcal{N}_i \cup \{i\}}{\textup{MAX}}\mathcal{F}_j^-$
\ENDFOR
\STATE $t \gets t+1$.
\ENDWHILE
\FOR{$i\in \mathcal{A}$}
    \STATE $\bar{\mathcal{R}}_i=\{\bar{p}_{1},\cdots,\bar{p}_{\kappa_i}\}\gets \mathtt{DistStochPipage}(\mathcal{F}_i) $
\ENDFOR
\State Return $\bar{\mathcal{R}}_1,\cdots,\bar{\mathcal{R}}_N$
\end{algorithmic}
}
\end{algorithm}

\begin{algorithm}[t]
\caption{$\mathtt{DistStochPipage}(~)$} 

\label{alg:the_alg_decen_3}
{\small
\begin{algorithmic}[1]
\STATE ${\textbf{Input:}}$~ $\mathcal{F}_i$
\STATE ${\textbf{Init:}} ~~ \bar{\mathcal{R}}_i=\emptyset$
\WHILE{$|\bar{\mathcal{R}}_i|<\kappa_i$}
\STATE pick any $(\alpha_p,p), (\alpha_p,p)\in\mathcal{F}_i$ such that $p,q\in\mathcal{P}_i$
\STATE Set: $\begin{cases}
 \delta_p = \textup{min} \{\alpha_p , 1-\alpha_q\}\\
 \delta_q = \textup{min} \{\alpha_q , 1-\alpha_p\}
 \end{cases}$
 \STATE Update $\begin{cases}
 \begin{cases}
 \alpha_p \gets \alpha_p-\delta_p \\
 \alpha_q \gets \alpha_q+\delta_p
 \end{cases}\textup{w.p.  } \frac{\delta_q}{\delta_p+\delta_q}\\
 \textup{or}\\
  \begin{cases}
 \alpha_q \gets \alpha_q-\delta_q\\
 \alpha_p \gets \alpha_p+\delta_q
 \end{cases}\textup{w.p.  } \frac{\delta_p}{\delta_p+\delta_q}
 \end{cases}$
 \STATE \textbf{if} $\alpha_p=1$ then $\bar{\mathcal{R}}_i\gets \bar{\mathcal{R}}_i \cup\{p\}$,  $\mathcal{F}_i\gets \mathcal{F}_i\backslash \{(\alpha_p,p)\}$
 \STATE \textbf{if} $\alpha_q=1$ then $\bar{\mathcal{R}}_i\gets \bar{\mathcal{R}}_i \cup\{q\}$,  $\mathcal{F}_i\gets \mathcal{F}_i\backslash \{(\alpha_q,q)\}$
\ENDWHILE
\STATE Return $\bar{\mathcal{R}}_i$
\end{algorithmic}
}
\end{algorithm}

Our proposed suboptimal solution to solve problem~\eqref{eq::mainProblem} consists of iterative propagation step~\eqref{eq::updateRule1}, which requires drawing $K_i$ samples of $\mathcal{R}_{\vect{x}_i}\subset\mathcal{P}$ to compute~\eqref{eq::emperic_nabla_F}, and update step~\eqref{eq::updateRule2}, which requires local interaction between neighbors. After $T$ steps, once $\vect{x}_i(T)$ is obtained, each agent $i\in\mathcal{A}$ computes its suboptimal solution from~\eqref{eq::R_bar_i} after running Pipage procedure~\eqref{eq::dist_pipage} locally for $|\mathcal{P}_i|$ steps to compute $\vect{y}_{ii}(|\mathcal{P}_i|)$. In what following by relying on the properties of the updated local copies of the probability membership we outline a minimum information exchange implementation of our distributed solution. The resulted implementation is summarized as the distributed multilinear-extension-based iterative greedy algorithm presented as Algorithm~\ref{alg:the_alg_decen_2}.


We define the local information set of each agent $i$ at time step $t$~as 
\begin{align}\label{eq:infoSetDef}
\mathcal{F}_i(t)\! = \!\big\{ (p,\alpha)\! \in \!\mathcal{P}\!\times\! [0,1] \Big| [\vect{x}_i(t)]_p\not=0~\text{and}~\alpha = [\vect{x}_i(t)]_p \big\}.
\end{align}
Since $\vect{x}_i(0)=\vect{0}$, then $\mathcal{F}_i(0) =\emptyset$.
Introduction of the information set $\mathcal{F}_i(t)$ provides the framework through which the agents only store and communicate the necessary information. Furthermore, it enables the agents to carry out their local computations using the available information in $\mathcal{F}_i(t)$.  

At each agent $i\in\mathcal{A}$, define $w_p^i(t) =\left[\widetilde{\nabla F}(\vect{x}_i(t))\right]_p$, $p\in\mathcal{P}_i$. $w_p^i(t)$, $p\in\mathcal{P}_i$, then is computed from~\eqref{eq::emperic_nabla_F}  using $K_i$ samples of $\{R_i^k(t)\}_{k=1}^{K_i}$ such that $q \in R_i^k(t)$ with the probability of $\alpha$ for all couples $(q,\alpha) \in \mathcal{F}_i(t)$.

It follows from submodularity of $f$ that $f(\mathcal{R}_i^k(t) \!\cup \{p\})-f(\mathcal{R}_i^k(t))\!\setminus \{p\})\geq 0$. Thus, $w_p^i(t)\geq0$, $p\in\mathcal{P}_i$. Consequently, one realization of $\widetilde{\vect{v}}_{i}(t)$ of problem~\eqref{eq::updateVect} is $\vect{1}_{\{p_1^\star,\cdots,p_{\kappa_i}^\star\}}$, where
\begin{align}\label{eq::p_star}
    \{p_1^\star,\cdots,p_{\kappa_i}^\star\}={\arg\max} \,\, (\{w_p^i\}_{p \in \mathcal{P}_i},\kappa_i).
\end{align}

Since $\vect{1}_{\{p_1^\star,\cdots,p_{\kappa_i}^\star\}}$ is a realization of $\widetilde{\vect{v}}_i(t)$, the corresponding realization of propagation rule~\eqref{eq::updateRule1} over the information set $\mathcal{F}_i(t)$ is denoted as 
\begin{align}\label{eq::F_prop}
    \mathcal{F}_i^-(t+1)=\mathcal{F}_i(t) \oplus \left\{\Big(\big\{p_1^\star,\cdots,p_{\kappa_i}^\star\big\},\frac{1}{T}\Big)\right\},
\end{align}
where $\mathcal{F}_i^-(t+1)$ is consistent with the realization of  $\vect{x}_{i}^-(t+1)$ through the membership probability vector to information set conversion relation~\eqref{eq:infoSetDef}.

Instead of the agents sharing $\vect{x}_i^-(t), \,\, i\in \mathcal{A}$ with their neighbors, they can share their local information set with their neighboring agents and execute a max operation over their local  and received information sets as 
\begin{align}\label{eq::F_update}
    \mathcal{F}_i(t+1)=\underset{j \in \mathcal{N}_i \cup \{i\}}{\textup{MAX}}\mathcal{F}_j^-(t+1).
\end{align}


Consequently, through  the membership probability vector to information set conversion relation~\eqref{eq:infoSetDef}, $\mathcal{F}_i(t+1)$ is consistent with a realization of  $\vect{x}_{i}(t+1)$ .

Finally, given the definition of $\mathcal{F}_i(t)$ in~\eqref{eq:infoSetDef} and in light of  Proposition~\ref{prop::rounding_prop}, the the stochastic rounding procedure~\eqref{eq::dist_pipage} and \eqref{eq::R_bar_i} can be implemented according to Algorithm~\ref{alg:the_alg_decen_3}.

In light of the discussion above, Algorithm~\ref{alg:the_alg_decen_2} gives our distributed multi-linear extension based suboptimal solution for problem~\eqref{eq::mainProblem}. The following theorem establishes the optimality bound of $f(\bar{\mathcal{R}})$ where  $\bar{\mathcal{R}}=\bigcup_{i\in \mathcal{A}} \{\bar{\mathcal{R}}_i\}$ is generated through the decentralized Algorithm~\ref{alg:the_alg_decen_2}.

\begin{thm}[Convergence guarantee and suboptimality gap of Algorithm~\ref{alg:the_alg_decen_2}]\label{thm::main}
Let $f:2^{\mathcal{P}} \to \realnonnegative$ be normalized, monotone increasing and submodular set function. Let $\mathcal{R}^\star$ to be the optimizer of problem~\eqref{eq::mainProblem}. Following the distributed Algorithm \ref{alg:the_alg_decen_2}, the admissible strategy set $\bar{\mathcal{R}}$ satisfies 
\begin{align*}
   \frac{1}{c}(1\!-\!{\textup{e}}^{-c})(1\!-\!(2 c\kappa d(\mathcal{G})\!+\!\frac{c\kappa}{2}+1)\frac{\kappa}{T})f(\mathcal{R}^\star) \!\leq\! \mathbb{E}[f(\bar{\mathcal{R}})],
\end{align*}
with probability of at least $1-2T\,n\, \textup{e}^{-\frac{1}{8T^2}\underline{K}}, \,\, \underline{K}=\underset{i \in \mathcal{A}}{\textup{min}}\,K_i.$
\end{thm}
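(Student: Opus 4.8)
The plan is to derive the final statement as the composition of the two results already established in this section: the ``Optimality gap'' theorem, which lower-bounds $F(\bar{\vect{x}}(T))$ with a product-form success probability, and the ``Distributed Pipage rounding'' theorem~\ref{lm::expectedInEq}, which relates $F(\bar{\vect{x}}(T))$ to the expected utility $\mathbb{E}[f(\bar{\mathcal{R}})]$ of the rounded set. The only genuinely new content is (i) to argue that the set $\bar{\mathcal{R}}$ returned by Algorithm~\ref{alg:the_alg_decen_2} is exactly the $\bar{\mathcal{R}}$ produced by the abstract propagate--update--round dynamics to which those two theorems apply, and (ii) to convert the product-form success probability into the clean union-bound form stated here.

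First I would establish the equivalence between Algorithm~\ref{alg:the_alg_decen_2} and the abstract dynamics~\eqref{eq::updateRule1}--\eqref{eq::updateRule2} followed by the rounding~\eqref{eq::dist_pipage}--\eqref{eq::R_bar_i}. This is precisely the content built up in Section~\ref{sec::implem}: by the information-set/membership-vector correspondence~\eqref{eq:infoSetDef}, the $\oplus$-propagation~\eqref{eq::F_prop} realizes~\eqref{eq::updateRule1} with the particular maximizer $\vect{1}_{\{p_1^\star,\cdots,p_{\kappa_i}^\star\}}$ of~\eqref{eq::p_star}, which is a valid realization of $\widetilde{\vect{v}}_i$ because $w_p^i\ge 0$ by submodularity of $f$; the $\textup{MAX}$-update~\eqref{eq::F_update} realizes the element-wise maximum~\eqref{eq::updateRule2}; and $\mathtt{DistStochPipage}$ (Algorithm~\ref{alg:the_alg_decen_3}) carries out~\eqref{eq::dist_pipage}. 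Consequently $\mathcal{F}_i(T)$ is consistent with a realization of $\vect{x}_i(T)$, and the set returned by Algorithm~\ref{alg:the_alg_decen_2} coincides with $\bar{\mathcal{R}}=\bigcup_{i\in\mathcal{A}}\bar{\mathcal{R}}_i$ of~\eqref{eq::R_bar_i}, so every statement proved for the abstract dynamics transfers verbatim to the algorithm's output.

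Next I would chain the two inequalities, keeping the two independent randomness sources separate. Conditioned on the event $E$ that every empirical gradient coordinate from~\eqref{eq::emperic_nabla_F} stays within $\frac{1}{2T}f(\mathcal{R}^\star)$ of the true partial derivative across all agents, all coordinates, and all $T$ iterations (the event underlying the Optimality-gap theorem, quantified by Lemma~\ref{lem::gradient_estimate}), we have on $E$ the deterministic bound $\frac{1}{c}(1-\textup{e}^{-c})(1-(2c\kappa d(\mathcal{G})+\frac{c\kappa}{2}+1)\frac{\kappa}{T})f(\mathcal{R}^\star)\le F(\bar{\vect{x}}(T))$. Theorem~\ref{lm::expectedInEq} then supplies $F(\bar{\vect{x}}(T))\le\mathbb{E}[f(\bar{\mathcal{R}})]$, where this expectation is over the independent Pipage choices in~\eqref{eq::dist_pipage} with $\vect{x}_i(T)$ held fixed. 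Composing the two on $E$ yields the claimed lower bound on $\mathbb{E}[f(\bar{\mathcal{R}})]$, and by the independence of the samples drawn at each agent and iteration together with Lemma~\ref{lem::gradient_estimate}, the event $E$ occurs with probability at least $\big(\prod_{i\in\mathcal{A}}(1-2\textup{e}^{-\frac{1}{8T^2}K_i})^{|\mathcal{P}_i|}\big)^{T}$.

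Finally I would simplify this success probability. Writing $\bar{p}=2\textup{e}^{-\frac{1}{8T^2}\underline{K}}$ with $\underline{K}=\min_{i\in\mathcal{A}}K_i$, each factor satisfies $1-2\textup{e}^{-\frac{1}{8T^2}K_i}\ge 1-\bar{p}$ since $K_i\ge\underline{K}$, so that $\big(\prod_{i\in\mathcal{A}}(1-2\textup{e}^{-\frac{1}{8T^2}K_i})^{|\mathcal{P}_i|}\big)^{T}\ge (1-\bar{p})^{nT}$ using $\sum_{i\in\mathcal{A}}|\mathcal{P}_i|=n$; Bernoulli's inequality then gives $(1-\bar{p})^{nT}\ge 1-nT\bar{p}=1-2Tn\,\textup{e}^{-\frac{1}{8T^2}\underline{K}}$, which is exactly the probability claimed. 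The main obstacle is the bookkeeping of the two randomness sources in the chaining step --- making precise that the rounding expectation is conditional on the gradient-ascent realization, while the high-probability event $E$ concerns only the sampling --- rather than any hard estimate, since the quantitative work is already carried out in the two cited theorems.
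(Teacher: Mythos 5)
Your proposal is correct and follows essentially the same route as the paper: establish that Algorithm~\ref{alg:the_alg_decen_2} is a realization of the abstract propagate--update--round dynamics, then compose the Optimality-gap theorem with Theorem~\ref{lm::expectedInEq}, and finally lower-bound the product-form success probability by $1-2Tn\,\textup{e}^{-\underline{K}/(8T^2)}$ (a step the paper merely asserts in the text after the Optimality-gap theorem, and which you justify via Bernoulli's inequality). Your explicit separation of the two randomness sources --- conditioning on the gradient-estimation event while treating the Pipage expectation separately --- is a cleaner bookkeeping of what the paper leaves implicit, but it is not a different argument.
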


\begin{proof}
Given that the information set propagation rules~\eqref{eq::p_star}, \eqref{eq::F_prop}, and  \eqref{eq::F_update} are a realization of the vector space propagation rules~\eqref{eq::updateVect}, \eqref{eq::updateRule1}, and \eqref{eq::updateRule2}, we can conclude that the vector $\vect{y} = [\vect{y}_1^\top,\cdots,\vect{y}_N^\top]^\top$ defined as 
\begin{align*}
\begin{cases}
[\vect{y}]_p=\alpha,  \quad &(p,\alpha) \in \mathcal{F}_i(T), \quad p \in \mathcal{P}_i\\
[\vect{y}]_p=0, &\textup{Otherwise}
\end{cases}
\end{align*}
is a realization of $\bar{\vect{x}}(T)$ and satisfies $ F(\bar{\vect{x}}(T)) = F(\vect{y}).$
Moreover, sampling a single strategy $\bar{p}_i$ according to $\vect{y}_i$ out of $\mathcal{P}_i$ is equivalent to sampling rule~\eqref{eq::dist_pipage}. Noting that $\vect{y}$ is a realization of $\bar{\vect{x}}(T)$, Lemma~\ref{thm::main} and Theorem~\ref{lm::expectedInEq} leads us to concluding the proof. 
\end{proof}


\begin{rem}[Extra communication for improved optimality gap]
{\it
Replacing the update step~\eqref{eq::updateRule2} with $\vect{x}_i(t+1)=\vect{y}_i(d(\mathcal{G}))$ where  $\vect{y}_i(0)=\vect{x}_i^-(t+1)$ and 
\begin{align*}
    \vect{y}_i(m) = \underset{j \in \mathcal{N}_i \cup \{i\}}{\textup{max}} \vect{y}_j(m-1), \quad m\in\{1,\cdots, d(\mathcal{G})\},
\end{align*}
i.e., starting with $\vect{x}_i^-(t+1)$ and recursively repeating the update step~\eqref{eq::updateRule2} using the output of the previous recursion for $d(\mathcal{G})$ times, 
each agent $i\in\mathcal{A}$ arrives at $\vect{x}_i(t+1)=\bar{\vect{x}}(t+1)$ (recall Lemma~\ref{lem::maxVect}). 
Hence, for this revised implementation, following the proof of Theorem~\ref{thm::main}, we observe that~\eqref{eq::mainthm3} is replaced by
$\left|\frac{\partial F}{\partial [\vect{x}]_p}(\bar{\vect{x}}(t)) - \frac{\partial F}{\partial [\vect{x}]_p}({\vect{x}}_i(t))\right|=0$, which consequently, leads to 
\begin{align}\label{eq::imp_opt_bound}
   \frac{1}{c}(1\!-\!{\textup{e}}^{-c})(1\!-\!(\!\frac{c\,\kappa}{2}+1)\frac{\kappa}{T})f(\mathcal{R}^\star)\!\leq\! F(\bar{\vect{x}}_{ii}(T)),
\end{align}
with the probability of $\left(\prod_{i\in\mathcal{A}} (1-2\textup{e}^{-\frac{1}{8T^2}K_j})^{|\mathcal{P}_i|}\right)^T$. This improved optimality gap is achieved by $(d(\mathcal{G})-1)T$ extra communication per agent. The optimality bound~\eqref{eq::imp_opt_bound} is the same bound that is achieved by the centralized algorithm of~\cite{JV:10}. To implement this revision, Algorithm~\ref{alg:the_alg_decen_2}'s step 11 (equivalent to~\eqref{eq::F_prop}) should be replaced by $ \mathcal{F}_i=\mathcal{H}_i(d(\mathcal{G}))$, where $\mathcal{H}_i(0)= \mathcal{F}_i^{-}$, and 
\begin{align}\label{eq::F_update_thm}
    \mathcal{H}_i(m)=\underset{j \in \mathcal{N}_i \cup \{i\}}{\textup{MAX}}\mathcal{H}_j^-(m-1), \quad m\in\{1,\cdots,d(\mathcal{G})\}.
\end{align}
}
\end{rem}

\setlength{\fboxsep}{1pt}

\begin{figure}
\centering
    {\includegraphics[width=.80\textwidth]{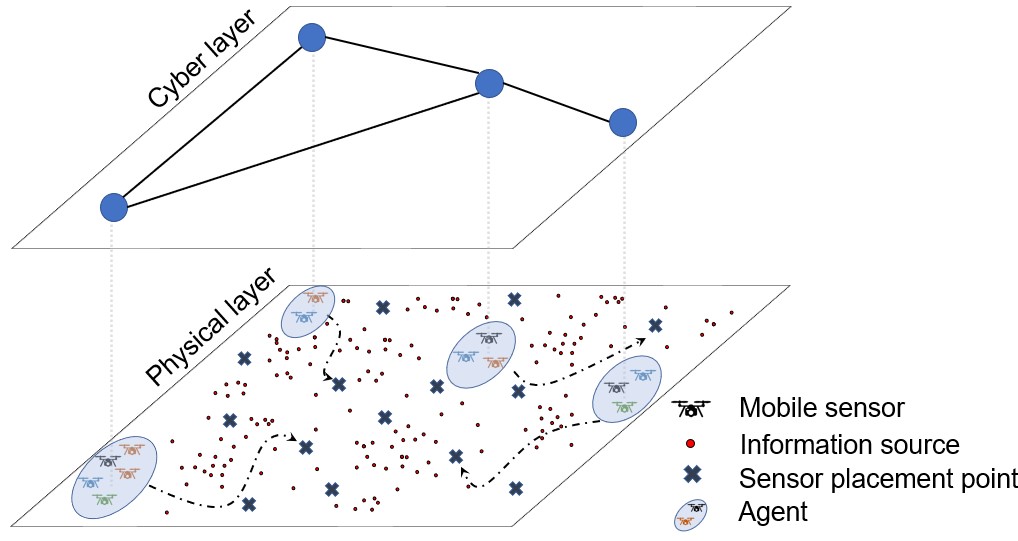}}%
    \caption{{\small   
    A sensor placement problem.}}
    \label{fig:city_partition}%
\end{figure}

\section{Numerical Example}\label{sec::numerical}

\begin{figure}[t]
    \centering
    \includegraphics[width=0.80\textwidth]{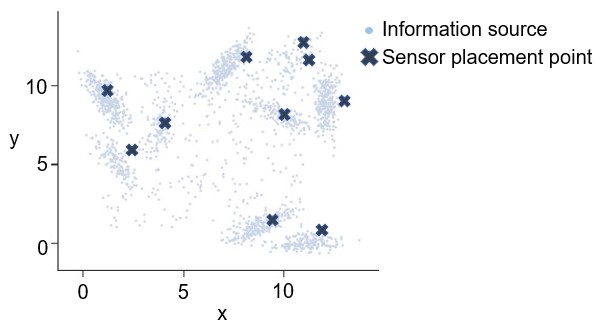}
    \caption{Set of information sources $\mathcal{D}$ and set of sensor placement point $\mathcal{B}$.}
    \label{fig:points}
\end{figure}

\begin{figure}[t]
     \centering
     \begin{subfigure}[t]{0.80\textwidth}
         \centering
         \includegraphics[width=\textwidth]{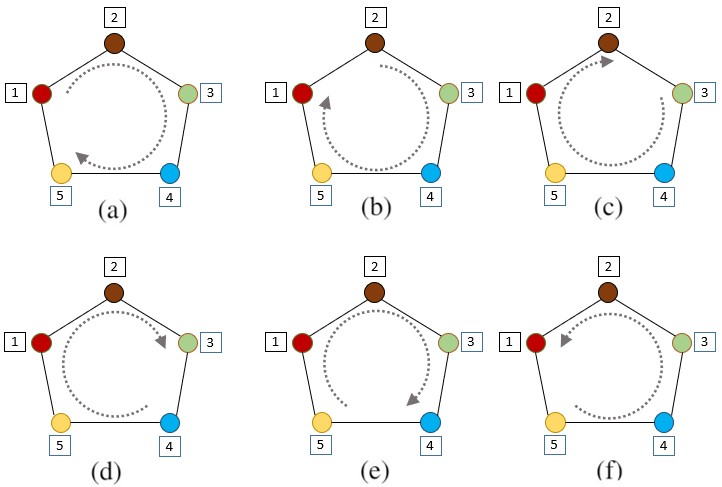}
     \end{subfigure}
        \caption{{\small The communication graph of the agents is a ring graph. Six  possible communication sequences to implement a sequential greedy algorithm is shown.}}
        \label{fig:sequences}
\end{figure}

Consider the sensor placement problem depicted in Fig.~\ref{fig:city_partition}. Each agent $i\in\mathcal{A}$ should deploy $\kappa_i$ mobile sensors at $\mathcal{B}_i \subset \mathcal{B}$ sensor-placement locations on the prespecified information harvesting points $\mathcal{B}$ to harvest information from a set of $\mathcal{D}$ information sources. $\mathcal{B}_1,\cdots,\mathcal{B}_N$ can be overlapping. 
Each information source $d\in\mathcal{D}$ relays its information to the nearest mobile sensor. To maximize the utility of the group the mobile sensors must be placed such that the distance between the information sources and mobile sensors is minimized. This sensor-placement problem can be cast as problem~\eqref{fig:city_partition} where the distinct strategy set of each agent $i\in\mathcal{A}$ is modeled as $\mathcal{P}_i=\{(i,b)|b\in \mathcal{B}_i\}$ and the utility function is 
\begin{align}\label{eq::util_num}
    f(\mathcal{P}) = L(\{\vect{0}\}) - L(\mathcal{P} \cup \{\vect{0}\}),
\end{align}
where $L(\mathcal{P}) =\SUM{d \in \mathcal{D}}{} \underset{(i,b) \in \mathcal{P}}{\textup{min}}  \Lnorm {d-b} \Rnorm$.
Here, with abuse of notation, we use $d$ and $b$, respectively, to indicate also the location of an information source and a mobile sensor. The utility function~\eqref{eq::util_num} is submodular and monotone increasing~\cite{RG-AK:10}. For our numerical study, we consider $2000$ information sources spread in a two-dimensional field with $10$ sensor deployment locations $\mathcal{B}=\{b_1,\cdots,b_{10}\}$, $b_l=(x_l,y_l)$, to deploy mobile sensors as shown in Fig.~\ref{fig:points}. We consider a set of five agents $\mathcal{A}=\{1,2,3,4,5\}$ whose goal is to deploy $\kappa_1=5, \,\,\kappa_2=2, \,\, \kappa_3=1, \,\, \kappa_4=1, \,\, \kappa_5=1$ mobile sensors at  $\mathcal{B}_1=\{b_1\cdots,b_{10}\}$, $\mathcal{B}_2=\{b_1,\cdots,b_5\}$, $\mathcal{B}_3=\{b_1,b_2,b_3\}$, $\mathcal{B}_4=\{b_1,b_2\}$, and $\mathcal{B}_5=\{b_1,b_2\}$. Although, the general form of the problem is NP-hard, we have designed our numerical example such the optimal solution is trivial: agent $1$ places its $\kappa_1=5$ mobile sensors at locations $\{b_6,\cdots,b_{10}\}\subset\mathcal{B}_1$, agent $2$ places its $\kappa_2=2$ mobile sensors at $\{b_4,b_5\}\subset \mathcal{B}_1$, agent $3$ places its $\kappa_3=1$ at $b_3\in\mathcal{B}_3$, agent $2$ places its $\kappa_2=1$ at $b_2\in\mathcal{B}_2$, and agent $1$ places its $\kappa_1=1$ at $b_1\in\mathcal{B}_1$ (agent $1$ and $2$ can also swap their location). This setting allows us to compare the outcome of the suboptimal solutions against the optimal one. Recall that to maximize the utility of the group the mobile sensors must be placed such that the distance between the information sources and mobile sensors is minimized. Thus, the optimal solution is to place all the mobile sensors at distinct sensor-placement locations. 

Let the communication topology of the agents be an undirected ring graph, see Fig.~\ref{fig:sequences}. First, we solve the problem using a decentralized sequential greedy algorithm  following~\cite{NR-SSK:21}. That is, we choose a route $\mathtt{SEQ}$ that visits all the agents and makes the agents perform the sequential greedy algorithm by sequential message passing according to $\mathtt{SEQ}$. Fig.~\ref{fig:sequences}(a)-(f) gives $6$ of the possible $\mathtt{SEQ}$ depicted by the semi-circular arrow inside the networks. Next, we solve the problem using Algorithm~\ref{alg:the_alg_decen_2}, which is a modest number of iteration ($T=50$) and using a modest number of samples ($K_i=1000$) in fact finds almost the optimal solution, see Fig.~\ref{fig:average2}. on the other hand, as Fig.\ref{fig:average} shows, the performance of the sequential greedy algorithm depends on what $\mathtt{SEQ}$ agents follow, with $\mathtt{SEQ}$ of Fig.~\ref{fig:sequences}(a) delivering the worst performance. We can attribute this inconsistency to the heterogeneity of the agents' mobile sensor numbers. When agents with larger number of choices pick first, this limits the options of the agents with a lower number of sensors available. However, the performance of Algorithm~\ref{alg:the_alg_decen_2} is regardless of any particular path on the graph. Through its iterative process, the agents get the chance to readjust their choices. Intuitively, this explains the better optimality gap of the continuous greedy algorithm over the sequential greedy algorithm. 
\begin{figure}[t]
     \centering
         \includegraphics[width=0.70\textwidth]{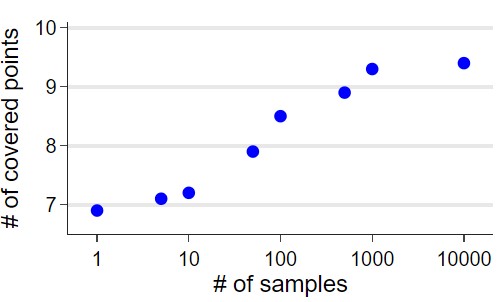}
        \caption{{\small The average number of sensor placement points covered by mobile sensors when Algorithm~\ref{alg:the_alg_decen_2} is implemented by different sample numbers and $T=10$.}}
        \label{fig:average2}
\end{figure}

\begin{figure}[t]
     \centering
         \includegraphics[width=0.70\textwidth]{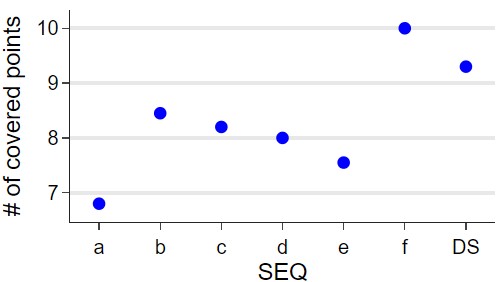}
        \caption{{\small The average number of covered placement points over $50$ difference random generated information source and sensor placement locations. The x-axis corresponds to the six $\mathtt{SEQ}$ in Fig.~\ref{fig:sequences}(a)-(f) and Algorithm~\ref{alg:the_alg_decen_2} denoted by DS. The y-axis corresponds to average number of sensor placement points covered by the mobile agents.}}
        \label{fig:average}
\end{figure}


\section{Conclusion}
We proposed a distributed suboptimal algorithm to solve the problem of maximizing a monotone increasing submodular set function subject to a partition matroid constraint. Our problem of interest was motivated by optimal multi-agent sensor placement problems in discrete space. Our algorithm was a practical decentralization of a multilinear extension-based algorithm that achieves $\frac{1}{c}(1-{\textup{e}}^{-c}-O(1/T))$ optimally gap, which is an improvement over $\frac{1}{1+c}$ optimality gap that the well-known sequential greedy algorithm achieves. In our numerical example, we compared the outcome obtained by our proposed algorithm with a decentralized sequential greedy algorithm that is constructed from assigning a priority sequence to the agents. We showed that the outcome of the sequential greedy algorithm is inconsistent and depends on the sequence. However, our algorithm's outcome, due to its iterative nature intrinsically tended to be consistent, which also explains its better optimally gap over the sequential greedy algorithm. Our future work is to study the robustness of our proposed algorithm to message dropout. 

\section*{Appendix A: Proof of the Results in Section~\ref{sec::Main}}

\begin{proof}[Proof of Lemma \ref{lem::maxVect}]
Since $f$ is monotone increasing and submodular, we have $f(\mathcal{R}_{\vect{x}_i(t)} \cup \{p\})-f(\mathcal{R}_{\vect{x}_i(t)} \setminus \{p\})\geq0$ and hence $\widetilde{\nabla F}(\vect{x}_i(t))$ has positive entries  $\forall i \in \mathcal{A}$. Thus, $\widetilde{\vect{v}}_i(t) \in \mathcal{M}_i$, the optimizer of the optimization \eqref{eq::updateVect} has nonnegative entries. Hence, according to the propagation and update rule \eqref{eq::updateRule1} and \eqref{eq::updateRule2}, we can conclude that $\vect{x}_{ii}(t)$ has increasing elements and only agent $i$ can update it and other agents only copy this value as $\hat{\vect{x}}_{ji}(t)$. Therefor, we can conclude that $[\hat{\vect{x}}_{ji}]_p(t)\leq [\vect{x}_{ii}]_p(t)$ for all $p \in \mathcal{P}_i$ which concludes the~proof. 
\end{proof}

\begin{proof}[Proof of Proposition \ref{thm::convergance}]
$f$ is a monotone increasing and submodular set function therefor $f(\mathcal{R}_{\vect{x}_i(t)} \cup \{p\})-f(\mathcal{R}_{\vect{x}_i(t)} \setminus \{p\})\geq0$ and hence $\widetilde{\nabla F}(\vect{x}_i(t))$ has positive entries  $\forall i \in \mathcal{A}$. Then, because $\widetilde{\vect{v}}_i(t) \in \mathcal{M}_i$, it follows from~\eqref{eq::updateVect} that $\widetilde{\vect{v}}_i(t)$ has non-negative entries, $[\widetilde{\vect{v}}_{i}(t)]_p\geq0$\, which satisfy $\SUM{ p \in \mathcal{P}_i }{}[\widetilde{\vect{v}}_{i}(t)]_p=\kappa_i$. Therefore, it follows from~\eqref{eq::updateRule1} and Lemma~\ref{lem::maxVect} that 
\begin{align}\label{eq::oneStepMax}
    \vect{1}.\vect{x}_{ii}(t+1)=\vect{1}.\vect{x}_{ii}(t)+\frac{\kappa_i}{T},\quad i\in\mathcal{A}.
\end{align}
Using~\eqref{eq::oneStepMax}, we can also write
\begin{align}\label{eq::backwardAdd}
    \vect{1}.\vect{x}_{ii}(t)=\vect{1}.\vect{x}_{ii}(t-d(\mathcal{G}))+\frac{\kappa_i}{T}d(\mathcal{G}),\quad i\in\mathcal{A}.
\end{align}
Furthermore, it follows from Lemma~\eqref{lem::maxVect} that 
for all $\forall p \in \mathcal{P}_i$ and any $j\in\mathcal{A}\backslash\{i\}$, we can write
\begin{align}\label{eq::elementInEq}
   [\vect{x}_{i}(t)]_p \geq[{\vect{x}}_{j}(t)]_p.
\end{align}
Also, since every agent $j\in\mathcal{A}\backslash\{i\}$ can be reached from agent $i\in\mathcal{A}$ at most in $d(\mathcal{G})$ hops, it follows from the propagation and update laws~\eqref{eq::updateRule1} and~\eqref{eq::updateRule2}, for all $\forall p \in \mathcal{P}_i$, for any $j\in\mathcal{A}\backslash\{i\}$ that
\begin{align}\label{eq::elementInEq2}
[{\vect{x}}_{j}(t)]_p \geq [\vect{x}_{i}(t-d(\mathcal{G}))]_p(t-d(\mathcal{G})).
\end{align}
Thus, for $i\in\mathcal{A}$ and $j\in\mathcal{A}\backslash\{i\}$,~\eqref{eq::elementInEq} and~\eqref{eq::elementInEq2} result in
\begin{align}\label{eq::backwardIn}
    \vect{1}.\vect{x}_{ii}(t) \geq \vect{1}.\hat{\vect{x}}_{ji}(t) \geq \vect{1}.\vect{x}_{ii}(t-d(\mathcal{G})).
\end{align}
Next, we can use~\eqref{eq::backwardAdd} and~\eqref{eq::backwardIn} to write
\begin{align}\label{eq::backwardInSep}
    \vect{1}.\vect{x}_{ii}(t) \geq \vect{1}.\hat{\vect{x}}_{ji}(t) \geq \vect{1}.\vect{x}_{ii}(t) - \frac{\kappa_i}{T}d(\mathcal{G}),
\end{align}
for $i\in\mathcal{A}$ and $j\in\mathcal{A}\backslash\{i\}$. Using~\eqref{eq::backwardInSep} for any $i\in\mathcal{A}$ we can write 
\begin{align}\label{eq::backwardDouInEq}
    &\sum\nolimits_{l \in \mathcal{A}} \vect{1}.\vect{x}_{ll}(t) \geq
   \vect{1}.\vect{x}_{ii}(t)+ \sum\nolimits_{j \in \mathcal{A} \backslash \{i\}}  \vect{1}.\hat{\vect{x}}_{ij}(t) \geq \nonumber \\
   &\sum\nolimits_{l \in \mathcal{A}} \vect{1}.\vect{x}_{ll}(t) - \frac{\kappa_l}{T}d(\mathcal{G}).
\end{align}
Then, using Lemma~\ref{lem::maxVect}, from \eqref{eq::backwardDouInEq} we can write
\begin{align*}
    \vect{1}.\bar{\vect{x}}(t) \geq \vect{1} .\vect{x}_i(t) \geq  \vect{1}.\bar{\vect{x}}(t) - \frac{\kappa}{T}d(\mathcal{G}),
\end{align*}
with $\kappa = \SUM{i\in \mathcal{A}}{}\kappa_i$, which ascertains~\eqref{eq::Pro51_a}. Next, note that from Lemma~\ref{lem::maxVect}, we have $\vect{x}_{jj}(t)=\vect{x}_{ii}^{-}(t)$ for any $i\in\mathcal{A}$. Then, using~\eqref{eq::updateRule1} and invoking Lemma~\ref{lem::maxVect}, we  obtain~\eqref{eq::Pro51_b},
which, given~\eqref{eq::oneStepMax}, also ascertains~\eqref{eq::Pro51_c}.
\end{proof}

\begin{proof}[Proof of Lemma~\ref{lem::x_update_property}]
The proof follows from a mathematical induction argument. The base case $\vect{x}_i(0)=\vect{0} \in \mathcal{M}$ and $\bar{\vect{x}}(0)=\vect{0} \in \mathcal{M}$ is trivially true. We take it to be true that at time $t$ and for each agent $i \in \mathcal{A}$ it hold that $\vect{x}_i(t) \in \mathcal{M}$ with 
\begin{align*}
    &\vect{1}.\vect{x}_{ii}(t) = \frac{t}{T} \kappa_i,~~\text{and}~~\vect{1}.\hat{\vect{x}}_{ij}(t) \leq \frac{t}{T} \kappa_j, \quad j \in \mathcal{A}\setminus \{i\}.
\end{align*}
for $t<T$ and $\widetilde{\vect{v}}_i(t) \in \mathcal{M}_i$ satisfying
\begin{align*}
    &\sum\nolimits_{p \in \mathcal{P}_i}\!\! [\widetilde{\vect{v}}_i(t)]_p \!=\! \kappa_i,~~\text{and}~~\sum\nolimits_{p \in \mathcal{P}_j}\!\! [\widetilde{\vect{v}}_i(t)]_p \!=\! 0, ~ j \!\in\! \mathcal{A} \!\setminus\! \{i\}.
\end{align*}
Since $[\widetilde{\nabla F}(\vect{x}_i(t))]_p\geq0\quad p \in \mathcal{P}$, then by propagation rule~\eqref{eq::updateRule1}, we establish that 
\begin{align*}
    &\vect{1}.\vect{x}^{-}_{ii}(t+1) = \frac{t+1}{T} \kappa_i,\\
    &\vect{1}.\hat{\vect{x}}^{-}_{ij}(t+1) \leq \frac{t}{T} \kappa_j, \quad j \in \mathcal{A} \setminus \{i\}.
\end{align*}
A a result of $\mathcal{M}_i,\, i\in \mathcal{A}$ being disjoint convex subspaces of $\mathcal{M}$, the update rule~\eqref{eq::updateRule2} leads to 
\begin{align*}
    &\vect{1}.\vect{x}_{ii}(t+1) = \frac{t+1}{T} \kappa_i,\\
    &\vect{1}.\hat{\vect{x}}_{ij}(t+1) \leq \frac{t+1}{T} \kappa_j, \quad j \in \mathcal{A}\setminus \{i\}.
\end{align*}
Therefore, we conclude that $\vect{x}_i(t+1) \in \mathcal{M}$. Moreover, by the definition of $\bar{\vect{x}}(t)$ in~\eqref{eq::x_bar_value} and $\mathcal{M}_i,\, i\in \mathcal{A}$ being disjoint convex subspaces of $\mathcal{M}$, we deduct that 
\begin{align*}
    \vect{1}.\bar{\vect{x}}(t) = \sum\nolimits_{i \in \mathcal{A}}\vect{1}.{\vect{x}}_{ii}(t+1) = \frac{t+1}{T}\kappa_i \quad i \in \mathcal{A}, 
\end{align*}
for $t<T$ and therefore, $\bar{\vect{x}}(t+1) \in \mathcal{M}$. We conclude the proof of (a) by induction and trivially (b) follows. 
\end{proof}
\begin{proof}[Proof of Theorem \ref{lm::expectedInEq}]
Consider the distributed Pipage rounding~\eqref{eq::dist_pipage}. Let $\tau_i$ be any arbitrary iteration stage of~\eqref{eq::dist_pipage} for agent $i\in\mathcal{A}$. Recall that we partitioned $\vect{y}_i$, $i\in\mathcal{A}$ as $\vect{y}_{i}(\tau_i) = [\hat{\vect{y}}_{i1}(\tau_i),\cdots,{\vect{y}}_{ii}(\tau_i),\cdots,\hat{\vect{y}}_{iN}(\tau_i)]$. Let 
$$\vect{y}(\tau)=[\vect{y}_{11}(\tau_1),\cdots,\vect{y}_{ii}(\tau_i+\tau),\cdots,\vect{y}_{NN}(\tau_N)]$$ for any $\tau_j\in\mathbb{Z}_{\geq0}$, $j\in\mathcal{A}$, and arbitrary $i\in\mathcal{A}$.
Distributed Pipage rounding~\eqref{eq::dist_pipage} results in 
\begin{align*}
    \vect{y}(\tau+1) = \begin{cases}\vect{y}(\tau) + \delta_p(\tau_i)\,\vect{z},&\text{w.p.}~\frac{\delta_q(\tau_i)}{\delta_p(\tau_i)+\delta_q(\tau_i)}\in[0,1],\\
    \vect{y}(\tau) - \delta_q(\tau_i)\,\vect{z},&\text{w.p.}~\frac{\delta_p(\tau_i)}{\delta_p(\tau_i)+\delta_q(\tau_i)}\in[0,1],\end{cases}
\end{align*}
for a $\vect{z} \in \{-1,0,1\}^n$ that satisfies $[\vect{z}]_p=1, \,\, [\vect{z}]_q=-1$ and $[\vect{z}]_r=0,\,\, r\not=p,q$. Next, note that the directional convexity of the multilinear function in Lemma~\ref{lem::prop_F_2} yields 
\begin{align*}
    F(\vect{y}(\tau))  \leq\,& \frac{\delta_q(\tau_i)}{\delta_p(\tau_i)+\delta_q(\tau_i)}F(\vect{y}(\tau)+\delta_p(\tau_i)\vect{z})+\\ &\frac{\delta_p(\tau_i)}{\delta_p(\tau_i)+\delta_q(\tau_i)}F(\vect{y}(\tau)-\delta_q(\tau_i)\vect{z}).
\end{align*}
 Hence, we can write 
\begin{align}\label{eq::recUp1}
    F(\vect{y}(\tau)) \leq \mathbb{E}[F(\vect{y}(\tau+1))\big|\vect{y}(\tau)].
\end{align}
Next, taking expectation with respect to $\vect{y}(\tau)$, we get 
\begin{align}\label{eq::recUp2}
    \mathbb{E}[F(\vect{y}(\tau))] \leq \mathbb{E}[F(\vect{y}(\tau+1))].
\end{align}
Note that because $\vect{y}(0)|_{\{\tau_j\}_{j=1}^N=\{\vect{0}\}^N}=\bar{\vect{x}}(T)$, we have $\mathbb{E}[F(\vect{y}(0)|_{\{\tau_j\}_{j=1}^N=\{\vect{0}\}^N})]= F(\bar{\vect{x}}(T))$. Consequently, since $\vect{y}(\tau)$ is defined for any arbitrary $\{\tau_j\}_{j=1}^N$, we can conclude~that 
\begin{align}\label{eq::rounding_proof_main}
    F(\bar{\vect{x}}(T)) \leq \mathbb{E}[F(\bar{\vect{y}})],
\end{align}
where
$\bar{\vect{y}} = [\vect{y}_{11}(|\mathcal{P}_1|),\vect{y}_{22}(|\mathcal{P}_2|),\cdots,\vect{y}_{NN}(|\mathcal{P}_N|)].$
Proposition~\ref{prop::rounding_prop} states that $\bar{y}$ is a vertex of $\mathcal{M}$, therefore
$F(\bar{\vect{y}}) = f(\mathcal{R}_{\bar{\vect{y}}})$.
On the other hand, it follows from~\eqref{eq::R_bar_i} that
$\mathcal{R}_{\bar{\vect{y}}} = \bigcup_{i\in \mathcal{A}} \bar{\mathcal{R}}_i.$ Consequently,~\eqref{eq::rounding_gaurnatee} follows  from~\eqref{eq::rounding_proof_main}.
\end{proof}

\begin{proof}[Proof of Theorem \ref{thm::main}] Knowing that $\left|\frac{\partial^2F}{\partial [\vect{x}]_p \partial [\vect{x}]_q}\right| \leq cf(\mathcal{R}^\star)$ from Lemma \ref{lem::prop_F}, and~\eqref{eq::Pro51_c}, 
it follows from Lemma~\ref{lm::aux3} that
$F(\bar{\vect{x}}(t+1)) - F(\bar{\vect{x}}(t)) \geq \nabla F(\bar{\vect{x}}(t)).(\bar{\vect{x}}(t+1)-\bar{\vect{x}}(t))-\frac{\kappa^2}{2T^2}cf(\mathcal{R}^\star)$,
which, given~\eqref{eq::Pro51_b}, leads to
\begin{align}\label{eq::mainthm2}
  &F(\bar{\vect{x}}(t+1)) - F(\bar{\vect{x}}(t)) \geq \nonumber \\
   &\qquad\frac{1}{T} \sum\nolimits_{i \in \mathcal{A}}\! \widetilde{\vect{v}}_i(t).\nabla F(\bar{\vect{x}}(t)) -\frac{\kappa^2}{2T^2}cf(\mathcal{R}^\star).
\end{align}
By definition, $\bar{\vect{x}}(t)\geq \vect{x}_i(t)$ for any  $\forall i \in \mathcal{A}$. Therefore, given~\eqref{eq::Pro51_a}, by invoking 
Lemma~\ref{lm::aux3}, for any  $i \in \mathcal{A}$ we can write
\begin{align}\label{eq::mainthm3}
    &\left|\frac{\partial F}{\partial [\vect{x}]_p}(\bar{\vect{x}}(t)) - \frac{\partial F}{\partial [\vect{x}]_p}({\vect{x}}_i(t))\right|\leq  \frac{\kappa}{T}d(\mathcal{G})cf(\mathcal{R}^\star),
\end{align}
for $p\in \{1,\cdots,n\}$. Recall that at each time step $t$, the realization of $\widetilde{\vect{v}}_i(t)$ in~\eqref{eq::updateVect} that Algorithm~\ref{alg:the_alg_decen_2} uses for $\{p_1^\star,\cdots,p_{\kappa_i}^\star\} \in \mathcal{P}_i$ is
\begin{align}\label{eq::t_v_i}
\widetilde{\vect{v}}_i(t)=\vect{1}_{\{p_1^\star,\cdots,p_{\kappa_i}^\star\}}, \end{align}
for every $i\in\mathcal{A}$. Thus, $\vect{1}.\widetilde{\vect{v}}_i(t)=\kappa_i$, $i\in\mathcal{A}$. Consequently, using~\eqref{eq::mainthm3} we can write
\begin{align}\label{eq::inEquality1}
    &\sum\nolimits_{i \in \mathcal{A}} \widetilde{\vect{v}}_i(t). \nabla F(\bar{\vect{x}}(t)) \geq \nonumber \\
    &\qquad\sum\nolimits_{i \in \mathcal{A}} \widetilde{\vect{v}}_i(t). \nabla F(\vect{x}_i(t)) - \frac{\kappa^2}{T}d(\mathcal{G}))cf(\mathcal{R}^\star).
\end{align}
Next, we let 
$\bar{\vect{v}}_i(t) = \underset{\vect{v} \in \mathcal{M}_i}{\textup{argmax}}\, \vect{v}.\nabla F(\bar{\vect{x}}(t))$ 
and 
$\hat{\vect{v}}_i(t) = \underset{\vect{v} \in \mathcal{M}_i}{\textup{argmax}}\, \vect{v}.\nabla F(\vect{x}_i(t)).$ 
Then, using $\hat{\vect{v}}_i(t). \nabla F(\vect{x}_i(t)) \geq \bar{\vect{v}}_i(t). \nabla F(\vect{x}_i(t))$ 
and $\hat{\vect{v}}_i(t). \nabla F(\vect{x}_i(t)) \geq \tilde{\vect{v}}_i(t). \nabla F(\vect{x}_i(t))$,
$i\in\mathcal{A}$, and ~\eqref{eq::mainthm3} we can also write 
\begin{subequations}\label{eq::determin_inEquality}
\begin{align}
    &\sum\nolimits_{i \in \mathcal{A}} \hat{\vect{v}}_i(t). \nabla F(\vect{x}_i(t)) \geq \SUM{i \in \mathcal{A}}{} \bar{\vect{v}}_i(t). \nabla F(\vect{x}_i(t))\geq \nonumber \\
    &\quad\sum\nolimits_{i \in \mathcal{A}} \bar{\vect{v}}_i(t). \nabla F(\bar{\vect{x}}(t)) - \frac{\kappa^2}{T}d(\mathcal{G})cf(\mathcal{R}^\star),\label{eq::inEquality4}\\
    &\sum\nolimits_{i \in \mathcal{A}}\!\!\! \hat{\vect{v}}_i(t). \nabla F(\vect{x}_i(t)) \!\geq\! \sum\nolimits_{i \in \mathcal{A}}\!\!\! \tilde{\vect{v}}_i(t). \nabla F(\vect{x}_i(t)).\label{eq::determin_inEquality_b}
\end{align}
\end{subequations}

On the other hand, by virtue of Lemma~\ref{lem::gradient_estimate}, $\left[\widetilde{\nabla F}(\vect{x}_i(t))\right]_p$, $p \in \mathcal{P}_i$ that each agent $i\in\mathcal{A}$ uses to solve optimization problem~\eqref{eq::p_star} (equivalently \eqref{eq::updateVect}) satisfies \begin{align}\label{eq::F_tilde_F}
    \left|\left[\widetilde{\nabla F}(\vect{x}_i(t))\right]_p - \frac{{\partial F}}{\partial [\vect{x}]_p}(\vect{x}_i(t))\right| \leq \frac{1}{2T}f(\mathcal{R}^\star)
\end{align} with the probability of  $1\!-\!2\textup{e}^{-\frac{1}{8T^2}K_i}$. Using~\eqref{eq::determin_inEquality_b} and~\eqref{eq::F_tilde_F}, and because the samples are drawn independently, we obtain 
\begin{subequations}
\begin{align}\label{eq::inEquality2}
    &\SUM{i \in \mathcal{A}}{}\! \widetilde{\vect{v}}_i(t). \nabla F(\vect{x}_i(t)) \geq \!\!
    \SUM{i \in \mathcal{A}}{}\! \widetilde{\vect{v}}_i(t). \widetilde{\nabla F}(\vect{x}_i(t))\! -\! \frac{\kappa}{2T}f(\mathcal{R}^\star),\\
    &\sum\nolimits_{i \in \mathcal{A}} \widetilde{\vect{v}}_i(t). \widetilde{\nabla F}(\vect{x}_i(t)) \geq
    \sum\nolimits_{i \in \mathcal{A}} \hat{\vect{v}}_i(t). \widetilde{\nabla F}(\vect{x}_i(t))\geq \nonumber \\
    &\quad \sum\nolimits_{i \in \mathcal{A}} \hat{\vect{v}}_i(t). \nabla F(\vect{x}_i(t)) - \frac{\kappa}{2T}f(\mathcal{R}^\star),\label{eq::inEquality3}
\end{align}    
\end{subequations}
with the probability of $\prod_{i\in\mathcal{A}} (1-2\textup{e}^{-\frac{1}{8T^2}K_i})^{|\mathcal{P}_i|}$.

From~\eqref{eq::inEquality1},~\eqref{eq::inEquality4},\eqref{eq::inEquality2}, and \eqref{eq::inEquality3} now we can write 
\begin{align}\label{eq::inEquality5}
    &\sum\nolimits_{i \in \mathcal{A}} \widetilde{\vect{v}}_i(t). \nabla F(\bar{\vect{x}}(t)) \geq \nonumber \\
    &\quad\sum\nolimits_{i \in \mathcal{A}} \!\!\bar{\vect{v}}_i(t). \nabla F(\bar{\vect{x}}(t))\! -\! (2 \kappa d(\mathcal{G}))+1 )\frac{\kappa}{T}f(\mathcal{R}^\star),
\end{align}
with the probability of $1-2\,\sum_{i \in \mathcal{A}}^{}\textup{e}^{-\frac{1}{8T^2}K_i}$.

Next, let $\vect{v}_i^\star$ be the projection of $\vect{1}_{\mathcal{R}^\star}$ into $\mathcal{M}_i$. Knowing that $\mathcal{M}_i$'s are disjoint sub-spaces of $\mathcal{M}$ covering the whole space then we can write 
\begin{align}\label{eq::subspace}
    \vect{1}_{\mathcal{R}^\star} = \sum\nolimits_{i \in \mathcal{A}} \vect{v}_i^\star.
\end{align}
Then, using \eqref{eq::inEquality5}, \eqref{eq::subspace}, and invoking Lemma \ref{lm::optimalInEq} and the fact that $\bar{\vect{v}}_i(t). \nabla F(\bar{\vect{x}}(t)) \geq \vect{v}^\star_i(t). \nabla F(\bar{\vect{x}}(t))$ we obtain
\begin{align}\label{eq::inEquality6}
    &\sum\nolimits_{i \in \mathcal{A}} \widetilde{\vect{v}}_i(t). \nabla F(\bar{\vect{x}}(t)) \geq \nonumber \\ 
    &\quad\sum\nolimits_{i \in \mathcal{A}} \vect{v}^\star_i(t). \nabla F(\bar{\vect{x}}(t)) - (2c \kappa d(\mathcal{G})+1 )\frac{\kappa }{T}f(\mathcal{R}^\star) =\nonumber \\    
    &\quad\vect{1}_{\mathcal{R}^\star}.\nabla F(\bar{\vect{x}}(t)) - (2c\kappa d(\mathcal{G})+1 )\frac{\kappa}{T}f(\mathcal{R}^\star) \geq \nonumber \\ 
    &\quad f(\mathcal{R}^\star) - cF(\bar{\vect{x}}(t))- (2c \kappa d(\mathcal{G})+1)\frac{\kappa}{T}f(\mathcal{R}^\star),
\end{align}
with the probability of $\prod_{i\in\mathcal{A}} (1-2\textup{e}^{-\frac{1}{8T^2}K_i})^{|\mathcal{P}_i|}$. Hence, using~\eqref{eq::mainthm2} and \eqref{eq::inEquality6}, we conclude that
\begin{align}\label{eq::mainthm4}
   &F(\bar{\vect{x}}(t+1)) - F(\bar{\vect{x}}(t)) \geq \frac{1}{T}(f(\mathcal{R}^\star) - cF(\bar{\vect{x}}(t))-\nonumber \\
   &\qquad
   (2c \kappa d(\mathcal{G}))+\frac{c\kappa}{2}+1)\frac{\kappa}{T^2}f(\mathcal{R}^\star),
\end{align} 
with the probability of $\prod_{i\in\mathcal{A}} (1-2\textup{e}^{-\frac{1}{8T^2}K_i})^{|\mathcal{P}_i|}$.
Next, let $g(t)=f(\mathcal{R}^\star)-cF(\bar{\vect{x}}(t))$ and $\beta =(2c \kappa d(\mathcal{G}))+\frac{c\kappa}{2}+1)\frac{\kappa}{T^2}f(\mathcal{R}^\star)$, to rewrite~\eqref{eq::mainthm4} as 
\begin{align}\label{eq::mainthm5}
   &(f(\mathcal{R}^\star)-cF(\bar{\vect{x}}(t))) - (f(\mathcal{R}^\star)-cF(\bar{\vect{x}}(t+1)))=\nonumber \\
   &\quad g(t)-g(t+1) \geq
   \frac{c}{T}(f(\mathcal{R}^\star)-cF(\bar{\vect{x}}(t))) - c\beta  = \nonumber \\
   &\quad\frac{c}{T}g(t)-c\beta.
\end{align}\label{eq::mainthm6}
Then from inequality \eqref{eq::mainthm5} we get 
\begin{align}\label{eq::mainthm61}
   g(t+1) \leq (1-\frac{c}{T})g(t)+c\beta
\end{align}
with the probability of $\prod_{i\in\mathcal{A}} (1-2\textup{e}^{-\frac{1}{8T^2}K_i})^{|\mathcal{P}_i|}$. Solving for inequality \eqref{eq::mainthm61} at time $T$ yields
\begin{align}\label{eq::mainthm7}
    g(T) \leq& \,(1-\frac{c}{T})^T g(0) +\beta \sum\nolimits_{k=0}^{T-1} (1-\frac{c}{T})^k \nonumber \\ 
    =\,&(1-\frac{c}{T})^T g(0) + T\beta(1-(1-\frac{c}{T})^T)
\end{align}
with the probability of 
$\left(\prod_{i\in\mathcal{A}} (1-2\textup{e}^{-\frac{1}{8T^2}K_i})^{|\mathcal{P}_i|}\right)^T$.
Substituting back $g(T)=f(\mathcal{R}^\star)-cF(\bar{\vect{x}}(T))$ and $g(0)=f(\mathcal{R}^\star)-cF(\vect{x}(0))=f(\mathcal{R}^\star)$, in~\eqref{eq::mainthm7} we then obtain 
\begin{align}\label{eq::mainthm8}
    &\frac{1}{c}(1-(1-\frac{1}{T})^T)(f(\mathcal{R}^\star)-T\beta)= \nonumber \\ 
    &\quad\frac{1}{c}(1-(1-\frac{1}{T})^T)(1-(2c \kappa d(\mathcal{G})+\frac{c\kappa}{2}+1)\frac{\kappa}{T})f(\mathcal{R}^\star)\nonumber \\
    &\quad \leq F(\bar{\vect{x}}(T)),
\end{align}
with the probability of $\left(\prod_{i\in\mathcal{A}} (1-2\textup{e}^{-\frac{1}{8T^2}K_i})^{|\mathcal{P}_i|}\right)^T$.
By applying $\textup{e}^{-c}\geq(1-(1-\frac{c}{T})^T)$, we get 
\begin{align}
\frac{1}{c}(1\!-\!{\textup{e}}^{-c})(1\!-\!(2 c\kappa d(\mathcal{G})\!+\!\frac{c\kappa}{2}+1)\frac{\kappa}{T})f(\mathcal{R}^\star) \!\leq F(\bar{\vect{x}}(T)),
\end{align}
with the probability of $\left(\prod_{i\in\mathcal{A}} (1-2\textup{e}^{-\frac{1}{8T^2}K_i})^{|\mathcal{P}_i|}\right)^T$.
which concludes the proof.
\end{proof}

\section*{Appendix B: Auxiliary Results}
\begin{proof}[Proof of Lemma~\ref{lem::gradient_estimate}]
Define the random variable 
\begin{align*}
  \!  X\!\! =\!\! \left(\!\!(f(\mathcal{R}_{\vect{x}}\!\cup \{p\})\!-\!f(\mathcal{R}_{\vect{x}}\!\!\setminus\! \{p\}))\!-\!\frac{\partial F}{\partial x_{p}}(\vect{x})\!\!\right)\!\!\Big/\!\!f(\mathcal{R}^\star),
\end{align*}
and assume that agent $j\in\mathcal{A}$ takes $K_j$ samples from  $\mathcal{R}_{\vect{x}}$ to construct $\{X_k\}_{k=1}^{K}$ realization of $X$. Since $f$ is a submodular function, then we have $(f(\mathcal{R}_{\vect{x}}\!\cup \{p\})\!-\!f(\mathcal{R}_{\vect{x}}\!\!\setminus\! \{p\}))\leq f(\mathcal{R}^\star)$. Consequently using equation~\eqref{eq::firstDer}, we conclude that $0\leq X \leq 1$. Hence, using Theorem~\ref{thm::sampling}, we~have $\left|\sum\nolimits_{k=1}^{K}X_k\right|\geq\frac{1}{2T}K$ with the probability of $2\textup{e}^{-\frac{1}{8T^2}K}$.  Hence, the estimation accuracy of $\nabla F(\vect{x})$, is given by
$
    \left|\left[\widetilde{\nabla F}(\vect{x}_i(t))\right]_p  - \frac{{\partial F}}{\partial [\vect{x}]_p}(\vect{x})\right| \geq \frac{1}{2T}f(\mathcal{R}^\star), ~ p\in \{1,\cdots,n\}$
with the probability of $2\textup{e}^{-\frac{1}{8T^2}K}$.
\end{proof}

\begin{lem}[First and second derivatives of the multilinear extension]\label{lem::prop_F} 
Let $f:2^\mathcal{P}\to\real_{\geq0}$, $\mathcal{P}=\{1,\cdots,n\}$, be increasing and submodular set function with curvature $c$, and the multinear extension function $F(\vect{x})$ defined in~\eqref{eq::F_determin}. Then, $\frac{\partial F}{\partial [\vect{x}]_p}\geq (1-c)f(p)$ for all $p\in\mathcal{P}$ and  $\vect{x}\in[0,1]^n$. Moreover,  $-cf(\mathcal{R}^\star)\leq \frac{\partial^2 F}{\partial [\vect{x}]_p [\vect{x}]_q}\leq 0$ for all $p,q\in\mathcal{P}$ and  $\vect{x}\in[0,1]^n$.
\end{lem}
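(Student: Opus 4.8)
The plan is to work directly from the stochastic representations of the derivatives in \eqref{eq::firstDer} and \eqref{eq::secondDer}, reducing each bound to a pointwise inequality on the integrand that holds for \emph{every} realization of $\mathcal{R}_{\vect{x}}$; once that is done, taking expectations is immediate and preserves all inequalities. The two ingredients I would feed into these pointwise estimates are (i) the curvature inequality, which rearranging the definition \eqref{eq::totalCurvature} yields $\Delta_f(p\,|\,\mathcal{S})\geq (1-c)\,\Delta_f(p\,|\,\emptyset)=(1-c)f(p)$ for every $\mathcal{S}$ with $p\notin\mathcal{S}$ (using the normalization $f(\emptyset)=0$ so that $\Delta_f(p\,|\,\emptyset)=f(\{p\})=f(p)$), and (ii) plain submodularity and monotonicity of $f$.

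For the first derivative I would first observe that the integrand in \eqref{eq::firstDer} is exactly a marginal gain, namely $f(\mathcal{R}_{\vect{x}}\cup\{p\})-f(\mathcal{R}_{\vect{x}}\setminus\{p\})=\Delta_f(p\,|\,\mathcal{R}_{\vect{x}}\setminus\{p\})$. Since $p\notin\mathcal{R}_{\vect{x}}\setminus\{p\}$, the curvature inequality applies and gives $\Delta_f(p\,|\,\mathcal{R}_{\vect{x}}\setminus\{p\})\geq (1-c)f(p)$ for every realization. Taking expectation over $\mathcal{R}_{\vect{x}}$ then produces $\partial F/\partial[\vect{x}]_p\geq (1-c)f(p)$, which is the claimed lower bound.

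For the second derivative I would substitute $\mathcal{S}=\mathcal{R}_{\vect{x}}\setminus\{p,q\}$ into \eqref{eq::secondDer}; regardless of whether $p$ or $q$ already lie in $\mathcal{R}_{\vect{x}}$, the four terms collapse uniformly to $f(\mathcal{S}\cup\{p,q\})-f(\mathcal{S}\cup\{q\})-f(\mathcal{S}\cup\{p\})+f(\mathcal{S})=\Delta_f(p\,|\,\mathcal{S}\cup\{q\})-\Delta_f(p\,|\,\mathcal{S})$. The upper bound $\leq 0$ is then immediate from submodularity applied to $\mathcal{S}\subset\mathcal{S}\cup\{q\}$. For the lower bound I would estimate the two marginals separately: curvature gives $\Delta_f(p\,|\,\mathcal{S}\cup\{q\})\geq(1-c)f(p)$, while diminishing returns (submodularity relative to $\emptyset$) gives $\Delta_f(p\,|\,\mathcal{S})\leq\Delta_f(p\,|\,\emptyset)=f(p)$, so the difference is at least $-c\,f(p)$ for every realization.

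The final step, and the one I expect to require the most care, is converting the realization-wise bound $-c\,f(p)$ into the stated $-c\,f(\mathcal{R}^\star)$. This needs $f(\{p\})\leq f(\mathcal{R}^\star)$, which I would justify by noting that each singleton $\{p\}$ is feasible for \eqref{eq::mainProblem} (since every $\kappa_i\geq 1$) and then invoking optimality of $\mathcal{R}^\star$ together with monotonicity, so that $f(\mathcal{R}^\star)$ dominates $f(\{p\})$. Because $c\geq 0$, this upgrades the pointwise bound to $-c\,f(p)\geq -c\,f(\mathcal{R}^\star)$, and taking expectation in \eqref{eq::secondDer} closes the lower bound. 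Everything else is a direct expectation of pointwise inequalities, so no further estimates are needed.
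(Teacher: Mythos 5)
Your proposal is correct and follows essentially the same route as the paper: both reduce the first-derivative bound to the curvature inequality applied to the marginal $\Delta_f(p\,|\,\mathcal{R}_{\vect{x}}\setminus\{p\})$, and both write the second derivative as $\mathbb{E}[\Delta_f(p\,|\,\mathcal{S}\cup\{q\})-\Delta_f(p\,|\,\mathcal{S})]$ with $\mathcal{S}=\mathcal{R}_{\vect{x}}\setminus\{p,q\}$, getting the upper bound from submodularity and the lower bound from the two-sided estimate $(1-c)f(\{p\})\leq\Delta_f(p\,|\,\cdot)\leq f(\{p\})$ followed by $f(\{p\})\leq f(\mathcal{R}^\star)$. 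Your write-up is in fact slightly more careful than the paper's (e.g., explicitly justifying $f(\{p\})\leq f(\mathcal{R}^\star)$ via feasibility of singletons), but the argument is the same.
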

\begin{proof}
The derivative of $F(x)$ can be written as 
\begin{align}\label{eq::firsec1}
    \frac{\partial F}{\partial [\vect{x}]_p} (\vect{x})=\Delta_f(p\,|\mathcal{R}_{\vect{x}} \setminus \{p\}).
\end{align}
Furthermore, by the definition of the total curvature \eqref{eq::totalCurvature} we can write $ c \geq 1-\frac{\Delta_f(p|\mathcal{R}_{\vect{x}} \setminus p)}{f(p)}$,
and by conjunction with equation~\eqref{eq::firsec1}, we have $\frac{\partial F}{\partial [\vect{x}]_p}\geq (1-c)f(p)$
which proves the first part of Lemma. Since $p \not \in \mathcal{R}_{\vect{x}}  \cup \{q\} \!\setminus \!\{p\}$, therefor by the definition of the total curvature \eqref{eq::totalCurvature} we can write
\begin{align}\label{eq::firsec2}
    (1-c)f(\{p\})\!\leq\! \Delta_f(p|\mathcal{R}_{\vect{x}}  \cup \{q\} \setminus \{p_i\})\!\leq\! f(\{p\}).
\end{align}
Moreover, Since $p \not \in \mathcal{R}_{\vect{x}}  \setminus \{p,q\}$, therefor by the definition of the total curvature \eqref{eq::totalCurvature} we can write
\begin{align}\label{eq::firsec3}
    (1-c)f(\{p\})\leq \Delta_f(p|\mathcal{R}_{\vect{x}} \setminus \{p,q\})\leq f(\{p\}).
\end{align}
Knowing that
$\Delta_f(p|\mathcal{R}_{\vect{x}}  \cup \{q\} \setminus \{p\})\!\!=\!\!f(\mathcal{R}_{\vect{x}} \cup \{p,q\})\!\! -\!\!f(\mathcal{R}_{\vect{x}}  \cup \{q\} \setminus \{p\})$
and $\Delta_f(p|\mathcal{R}_{\vect{x}} \setminus \{p,q\})=f(\mathcal{R}_{\vect{x}} \cup \{p\}) \setminus \{q\}) -f(\mathcal{R}_{\vect{x}} \setminus \{p,q\})$, 
the definition of second order derivative of $F$ \eqref{eq::secondDer}, we can be written as 
\begin{align}\label{eq::firsec4}
\frac{\partial F}{\partial [\vect{x}]_p \partial [\vect{x}]_q} \!=\! \mathbb{E}[\Delta_f(p|\mathcal{R}_{\vect{x}}  \cup \{q\} \!\setminus\! \{p\}) - \Delta_f(p|\mathcal{R}_{\vect{x}}\! \setminus \!\{p,q\})].
\end{align}
Putting \eqref{eq::firsec2} and \eqref{eq::firsec3} and \eqref{eq::firsec4} together in conjunction with submodular property of $f$ results in $-cf(\{p\}) \leq \frac{\partial^2F}{\partial [\vect{x}]_q \partial [\vect{x}]_q} \leq 0$.
Knowing that $f(\{p\})\leq f(\mathcal{R}^\star)$ results in proving the second part of Lemma.
\end{proof}

\begin{lem}[Directional Convexity]\label{lem::prop_F_2} Let $f:2^\mathcal{P}\to\real_{\geq0}$, $\mathcal{P}=\{1,\cdots,n\}$, be monotone increasing and submodular set function with a multinear extension function $F(\vect{x})$ defined in~\eqref{eq::F_determin}. Then, for any given $\vect{x} \in [0,1]^n$ and $\vect{w} \in \{-1,0,1\}^n$ where $w_p=1, \,\, w_q=-1$ and $w_l=0,\,\, l\in\{1,\cdots,n\}\backslash\{p,q\}$ for some $p,q\in\{1,\cdots,n\}$,  $F(\vect{x}+\lambda \vect{w}):\mathbb{R} \to \realnonnegative$ is a convex function of $\lambda$.\end{lem}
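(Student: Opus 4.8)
The plan is to exploit the defining feature of the multilinear extension: by \eqref{eq::F_determin}, $F$ is \emph{multilinear}, i.e.\ affine in each coordinate $[\vect{x}]_r$ separately. Consequently, if I freeze every coordinate except the $p$-th and $q$-th, the restriction of $F$ to the $([\vect{x}]_p,[\vect{x}]_q)$-plane is a polynomial of the form
\begin{align*}
F(\vect{x}) = A\,[\vect{x}]_p[\vect{x}]_q + B\,[\vect{x}]_p + C\,[\vect{x}]_q + D,
\end{align*}
where the coefficients $A,B,C,D$ depend only on the remaining coordinates $\{[\vect{x}]_r : r\neq p,q\}$, and in particular $A = \frac{\partial^2 F}{\partial [\vect{x}]_p\partial [\vect{x}]_q}$ is independent of $[\vect{x}]_p$ and $[\vect{x}]_q$.

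First I would substitute the parametrization $[\vect{x}+\lambda\vect{w}]_p = [\vect{x}]_p + \lambda$ and $[\vect{x}+\lambda\vect{w}]_q = [\vect{x}]_q - \lambda$ (all other coordinates unchanged) into the display above. Expanding $([\vect{x}]_p+\lambda)([\vect{x}]_q-\lambda)$ shows that $g(\lambda):=F(\vect{x}+\lambda\vect{w})$ is a quadratic in $\lambda$ whose only second-order contribution is the $-A\lambda^2$ term; hence $g''(\lambda) = -2A = -2\frac{\partial^2 F}{\partial [\vect{x}]_p\partial [\vect{x}]_q}$ identically in $\lambda$. Equivalently, this is just the curvature $\vect{w}^\top \nabla^2 F\,\vect{w}$ along $\vect{w}=\vect{e}_p-\vect{e}_q$, in which the diagonal Hessian entries vanish because $F$ is affine in each variable.

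It then remains to verify $A\le 0$, for which I would invoke Lemma~\ref{lem::prop_F}, giving $\frac{\partial^2 F}{\partial [\vect{x}]_p\partial [\vect{x}]_q}\le 0$ for every $\vect{x}\in[0,1]^n$. The point worth flagging is that Lemma~\ref{lem::prop_F} only controls the sign of the second derivative on the unit cube, whereas $g$ is claimed convex for all $\lambda\in\real$, and $\vect{x}+\lambda\vect{w}$ may leave $[0,1]^n$. This is exactly where the independence of $A$ from $[\vect{x}]_p,[\vect{x}]_q$ does the work: since $A$ depends only on the frozen coordinates $\{[\vect{x}]_r : r\neq p,q\}\subset[0,1]$, its value along the entire line coincides with its value at the admissible point $\vect{x}\in[0,1]^n$, where Lemma~\ref{lem::prop_F} directly yields $A\le 0$. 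Therefore $g''(\lambda)=-2A\ge 0$ for all $\lambda$, proving that $\lambda\mapsto F(\vect{x}+\lambda\vect{w})$ is convex.

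The main obstacle is precisely this last subtlety---reconciling the global-in-$\lambda$ convexity claim with the cube-restricted sign information of Lemma~\ref{lem::prop_F}. Once one observes that multilinearity makes the mixed partial constant in the two active coordinates, the remainder is a one-line quadratic computation.
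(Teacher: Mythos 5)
Your proof is correct. It differs from the paper's in mechanics, though both ultimately rest on the same fact: along the direction $\vect{w}=\vect{e}_p-\vect{e}_q$ the second derivative in $\lambda$ is a constant whose sign is controlled by submodularity. The paper re-expands $F(\vect{x}+\lambda\vect{w})$ from the definition~\eqref{eq::F_determin}, grouping terms over subsets $\mathcal{R}\subset\mathcal{P}\setminus\{p,q\}$, and obtains
\begin{align*}
\frac{\partial^2F(\vect{x}+\lambda \vect{w})}{\partial \lambda^2} = \sum_{\mathcal{R} \subset \mathcal{P}\setminus \{p,q\}} 2\,\mathbb{P}(\mathcal{R})\bigl(f(\{p\} \cup \mathcal{R}) + f(\{q\} \cup \mathcal{R}) - f(\mathcal{R}) - f(\{p,q\} \cup \mathcal{R})\bigr),
\end{align*}
concluding termwise from submodularity and $\mathbb{P}(\mathcal{R})\geq 0$. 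You instead treat $F$ abstractly as a multilinear polynomial, observe that $g''(\lambda)=-2\,\partial^2F/\partial[\vect{x}]_p\partial[\vect{x}]_q$ with the mixed partial independent of $[\vect{x}]_p,[\vect{x}]_q$, and delegate the sign to Lemma~\ref{lem::prop_F}. Your route is more modular (it reuses an already-proven bound rather than re-deriving it) and you are more explicit than the paper about why the cube-restricted sign information suffices for convexity over all $\lambda\in\real$ --- a point the paper handles only implicitly through the fact that its displayed second derivative is constant in $\lambda$ and its coefficients $\mathbb{P}(\mathcal{R})$ depend only on the frozen coordinates. The paper's version, in exchange, is self-contained and exhibits the submodularity inequality driving the result directly. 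Both are sound.
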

\begin{proof}
Defining the vector $\vect{w} \in \mathbb{R}^n$ and $w_p=1, \,\, w_q=-1$ and $w_l=0,\,\, l\not=p,q$, then the multilinear extension of set function $f$ in the direction of $\vect{w}$ is defined as
\begin{align*}
    F&(\vect{x}+\lambda \vect{w}) = \\
    &\sum_{\mathcal{R} \subset \mathcal{P}\setminus \{p,q\}}{} \!\!\!\!f(\{p\} \cup \mathcal{R}) ([\vect{x}]_p+\lambda)(1-([\vect{x}]_q-\lambda))\mathbb{P}(\mathcal{R})+\\
    &\qquad \qquad f(\{q\} \cup \mathcal{R}) (1-([\vect{x}]_p+\lambda))([\vect{x}]_q-\lambda)\mathbb{P}(\mathcal{R})+\\
    &\qquad \qquad f(\mathcal{R}) (1-([\vect{x}]_p+\lambda))(1-([\vect{x}]_q-\lambda))\mathbb{P}(\mathcal{R})+\\
    &\qquad \qquad f(\mathcal{R}) \!\! - \!\! f(\{p,q\} \cup \mathcal{R}))([\vect{x}]_p+\lambda)([\vect{x}]_q\!\! -\!\! \lambda)\mathbb{P}(\mathcal{R}).
\end{align*}
with $\mathbb{P}(\mathcal{R})=\prod_{r \in \mathcal{R}}^{}\! x_r \prod_{r \not\in \mathcal{R}}^{}\!(1-x_r)$. Taking the second derivative of $F(\vect{x}+\lambda \vect{w})$ with respect to $\lambda$ yields
\begin{align*}
    \frac{\partial^2F(\vect{x}+\lambda \vect{w})}{\partial \lambda^2} \!\! =\!\!\!\! \sum_{\mathcal{R} \subset \mathcal{P}\setminus \{p,q\}}{} \!\!\!&\!\!\!\!\!\!2\mathbb{P}(\mathcal{R}) (f(\{p\} \cup \mathcal{R}) + f(\{q\} \cup \mathcal{R}) \\
    &- f(\mathcal{R}) - f(\{p,q\} \cup \mathcal{R})).
\end{align*}
The submodularity of $f$ asserts that $ \frac{\partial^2F(\vect{x}+\lambda \vect{w})}{\partial \lambda^2}\geq 0$ and consequently, $F(\vect{x}+\lambda \vect{w})$ is a convex function of $\lambda$.
\end{proof}

\begin{lem}[Interval Bound of Twice differentiable function]\label{lm::aux3}
Consider a twice differentiable function $F(\vect{x}):[0,1]^n \to \mathbb{R}$ which satisfies $\left|\frac{\partial^2F}{\partial [\vect{x}]_p \partial [\vect{x}]_q}\right| \leq \alpha$ for any $p,q\in\{1,\cdots,n\}$.  Then for any $\vect{x}_1,\vect{x}_2 \in \mathbb{R}^n$ satisfying $\vect{x}_2 \geq \vect{x}_1$ and $\vect{1}.({\vect{x}}_2 - {\vect{x}}_1) \leq \beta$ we have
\begin{subequations}\label{eq::mainthm1}
\begin{align}
   & \left|\frac{\partial F}{\partial [\vect{x}]_p}({\vect{x}}_1+\epsilon({\vect{x}}_2-{\vect{x}}_1)) -  \frac{\partial F}{\partial [\vect{x}]_p}({\vect{x}}_1)\right| \leq \epsilon \alpha \beta,\\
   & F({\vect{x}}_2) - F({\vect{x}}_1) \geq 
   \nabla F({\vect{x}}_1).({\vect{x}}_2-{\vect{x}}_1)-\frac{1}{2}\alpha \beta^2,
\end{align}     
\end{subequations}
for $\epsilon \in [0 \,\, 1]$.
\end{lem}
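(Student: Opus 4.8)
The plan is to derive both inequalities by integrating the second-order information of $F$ along the straight segment joining $\vect{x}_1$ to $\vect{x}_2$. I write $\vect{d}=\vect{x}_2-\vect{x}_1$ and note that the two hypotheses translate to $[\vect{d}]_l\geq 0$ for every $l$ together with $\vect{1}.\vect{d}\leq\beta$. The componentwise nonnegativity of $\vect{d}$ is the only structural fact I will use beyond the uniform bound $\alpha$ on the second partials: it is exactly what lets me replace $\sum_l |[\vect{d}]_l|$ by $\vect{1}.\vect{d}$ and thus obtain the clean constants $\alpha\beta$ and $\tfrac12\alpha\beta^2$.

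For part (a), I would fix $p$ and apply the fundamental theorem of calculus to the scalar map $s\mapsto \frac{\partial F}{\partial[\vect{x}]_p}(\vect{x}_1+s\vect{d})$ on $[0,\epsilon]$, which gives
$$\frac{\partial F}{\partial[\vect{x}]_p}(\vect{x}_1+\epsilon\vect{d})-\frac{\partial F}{\partial[\vect{x}]_p}(\vect{x}_1)=\int_0^\epsilon\sum_{q}\frac{\partial^2 F}{\partial[\vect{x}]_p\partial[\vect{x}]_q}(\vect{x}_1+s\vect{d})\,[\vect{d}]_q\,\mathrm{d}s.$$
Taking absolute values, bounding every second partial by $\alpha$, and using $[\vect{d}]_q\geq 0$ so that $\sum_q[\vect{d}]_q=\vect{1}.\vect{d}\leq\beta$, the integrand is at most $\alpha\beta$, and integrating over $[0,\epsilon]$ yields the bound $\epsilon\alpha\beta$, which is precisely the first inequality.

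For part (b), I would again invoke the fundamental theorem, now for $\epsilon\mapsto F(\vect{x}_1+\epsilon\vect{d})$, to write $F(\vect{x}_2)-F(\vect{x}_1)=\int_0^1\nabla F(\vect{x}_1+\epsilon\vect{d}).\vect{d}\,\mathrm{d}\epsilon$, and then split the integrand as $\nabla F(\vect{x}_1).\vect{d}+[\nabla F(\vect{x}_1+\epsilon\vect{d})-\nabla F(\vect{x}_1)].\vect{d}$. The first term integrates to the desired linear part $\nabla F(\vect{x}_1).\vect{d}$. For the remainder I would apply part (a) componentwise together with $[\vect{d}]_p\geq 0$ to get $\bigl|[\nabla F(\vect{x}_1+\epsilon\vect{d})-\nabla F(\vect{x}_1)].\vect{d}\bigr|\leq \epsilon\alpha\beta\,(\vect{1}.\vect{d})\leq\epsilon\alpha\beta^2$; integrating this bound over $\epsilon\in[0,1]$ gives $\tfrac12\alpha\beta^2$. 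Since the remainder is therefore bounded below by $-\tfrac12\alpha\beta^2$, the second inequality follows.

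I do not anticipate a genuine obstacle here, since the statement is a quantitative first-order Taylor estimate and both parts reduce to one-dimensional integration. The only points requiring care are bookkeeping ones: using the nonnegativity of $\vect{d}$ rather than a cruder $\ell_1$-norm bound so that the weight $\sum_q[\vect{d}]_q$ collapses exactly to $\vect{1}.\vect{d}$, and observing that the segment $\vect{x}_1+\epsilon\vect{d}$ remains in the convex domain $[0,1]^n$ on which the second-derivative bound is assumed (which holds in the intended application where $\vect{x}_1,\vect{x}_2\in\mathcal{M}$).
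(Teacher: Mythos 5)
Your proposal is correct and follows essentially the same route as the paper's proof: both parts integrate along the segment from $\vect{x}_1$ to $\vect{x}_2$, use the uniform bound $\alpha$ on the second partials together with the componentwise nonnegativity of $\vect{x}_2-\vect{x}_1$ to collapse the sum to $\vect{1}.(\vect{x}_2-\vect{x}_1)\leq\beta$, and then feed part (a) into the integral representation of $F(\vect{x}_2)-F(\vect{x}_1)$ for part (b). Your write-up is, if anything, slightly more explicit about the decomposition of the integrand than the paper's.
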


\begin{proof}
Let $\vect{h}_p=[\frac{\partial^2F}{\partial [\vect{x}]_p \partial x_1},\cdots,\frac{\partial^2F}{\partial [\vect{x}]_p \partial x_n}]^\top$. Then, we can write 
\begin{align}\label{eq::aux3_1}
    &\left|\frac{\partial F}{\partial [\vect{x}]_p}({\vect{x}}_1+\epsilon({\vect{x}}_2-{\vect{x}}_1)) -  \frac{\partial F}{\partial [\vect{x}]_p}({\vect{x}}_1)\right| \nonumber \\&=\left|\int_{0}^{\epsilon}
    \vect{h}_p({\vect{x}}_1+\tau({\vect{x}}_2-{\vect{x}}_1)).({\vect{x}}_2-{\vect{x}}_1) \textup{d}\tau\right| \nonumber \\
    &\leq \int_{0}^{\epsilon} \alpha \vect{1}.({\vect{x}}_2-{\vect{x}}_1)\textup{d}\tau = \epsilon \alpha \beta,
\end{align}
Furthermore,
$
    F({\vect{x}}_2)  - F({\vect{x}}_1) = \int_{0}^{1}
   \nabla F({\vect{x}}_1+\epsilon({\vect{x}}_2-{\vect{x}}_1)).(\bar{\vect{x}}(t+1)-\bar{\vect{x}}(t))\textup{d}\epsilon  \geq \int_{0}^{1}
   (\nabla F({\vect{x}}_1)-\epsilon \alpha \beta).({\vect{x}}_2-{\vect{x}}_1)\textup{d}\epsilon = 
   \nabla F({\vect{x}}_1).({\vect{x}}_2\!\!-\!\!{\vect{x}}_1)-\frac{1}{2}\alpha \beta^2$,
with the third line follow from equation~\eqref{eq::aux3_1}, which concludes the proof. 
\end{proof}

\end{document}